\documentclass[12pt]{amsart}
\usepackage{amsmath,amsfonts,amstext,amsthm,epsfig,mathrsfs}
\usepackage{amssymb}
\usepackage{MnSymbol}

\newtheorem{Theorem}{Theorem}[section]
\newtheorem{Lemma}[Theorem]{Lemma}
\newtheorem{Corol}[Theorem]{Corollary}
\newtheorem{Prop}[Theorem]{Proposition}
\newtheorem{Rem}[Theorem]{Remark}
\newtheorem{Hyp}[Theorem]{Hypothesis}
\makeatletter
    \addtocounter{section}{0}
    
     \@addtoreset{equation}{section}
\makeatother

\def\N{\mathbb N}
\def\R{\mathbb R}

\def\C{\mathbb C}
\def\loc{\mathrm{loc}}
\def\span{\mathrm{span}}
\newcommand{\weakto}{\rightharpoonup}
\newcommand{\umlaut}{\"}
\def\cl{\overline}

\def\circantiorario{\rcirclerightint}

\DeclareMathOperator*{\essinf}{ess\, inf}

\DeclareMathOperator*{\esslim}{ess\, lim}

\begin{document}

\title[wave equations with steep potential well]
{Singular convergence for semilinear wave equations with steep potential well}%
\author{Martino Prizzi}

\address{Martino Prizzi, Universit\`a di Trieste, Dipartimento di
Matematica, Informatica e Geoscienze, Via Valerio 12/1, 34127 Trieste, Italy}%
\email{mprizzi@units.it}%
\subjclass[2010]{35L10, 35B25}%
\keywords{damped wave equation, potential well}%

\begin{abstract} 
We consider a semilinear wave equation in the whole space with a deep potential well. We prove that as the depth of the well tends to infinity, the solutions of the equation converge to the solutions of a wave equation defined on the bottom of the well, with Dirichlet condition on the boundary.
\end{abstract}
\maketitle


\section{Introduction}
In this paper we consider the semilinear wave equation
\begin{equation}\label{introequation}
\begin{aligned}
u_{tt}+\gamma u_t+ V_\beta(x)u-\Delta u&=f_\beta(x,u),&&(t,x)\in[0,+\infty[\times\R^3
\end{aligned}\end{equation}
in the energy space $H^1(\R^3)\times L^2(\R^3)$. 
The damping coefficient $\gamma$ is a non-negative (possibly zero) constant and the function $V_\beta(x)$ has the form $V_\beta(x)=1+\beta V(x)$, where $\beta$ is a (large) positive parameter and $V(x)$ is a ``well potential''. This means that there exists a bounded open domain 
$\Omega$ on which $V(x)\equiv 0$, while $V(x)$ is strictly positive on $\R^3\setminus\cl\Omega$ and $\lim_{|x|\to\infty} V(x)=1$. The functions $f_\beta(x,u)$ are $C^1$ with respect to $u$ and satisfy a growth condition of type $|\partial_u f_\beta(x,u)|\leq C(a(x)+|u|^2)$, so they define locally Lipschitz continuous mappings from $H^1(\R^3)$ to $L^2(\R^3)$; moreover, as $\beta\to+\infty$, the family $f_\beta$ converges in some sense to a function $f_\Omega$ with analogous properties. It is well known that the Cauchy problem for  (\ref{introequation}) is well posed in $H^1(\R^3)\times L^2(\R^3)$. Our aim is to investigate the behaviour of the solutions when the parameter $\beta$ tends to infinity. There are several results concerning the stationary solutions of (\ref{introequation}) (see e.g. \cite{BPW,BP, GuoTang,J,Wang-Zhou}) and the eigenvales of the linear operator $-\Delta+V_\beta(x)$ (see e.g. \cite{BPW,Liu-Huang,SZ}). Both physical and mathematical  evidence confirm that as $\beta\to+\infty$ the solutions of the nonlinear elliptic equation
\begin{equation}
-\Delta u +V_\beta(x)u=f_\beta(x,u),\quad x\in\R^3
\end{equation} 
 tend to remain confined within the region $\Omega$, and become closer and closer to solutions of the elliptic Dirichlet problem
\begin{equation}
\begin{aligned}-\Delta u+u&=f_\Omega(x,u), && x\in\Omega\\u&=0,&&x\in\partial\Omega\end{aligned}
\end{equation} 
Our aim is to prove that the same is true for non stationary solutions, and to provide a framework to study the (singular) convergence of the dynamics generated by (\ref{introequation}) in the space $H^1(\R^3)\times L^2(\R^3)$ to the one generated by the ``limit'' equation
\begin{equation}\label{introequation2}
\begin{aligned}
u_{tt}+\gamma u_t+u-\Delta u&=f_\Omega(x,u),&&(t,x)\in[0,+\infty[\times\Omega\\u&=0,&&(t,x)\in[0,+\infty[\times\partial\Omega
\end{aligned}
\end{equation}
in the space $H^1_0(\Omega)\times L^2(\Omega)$. In the spirit of works like \cite{ACL, BCCD, CR,PR01,PR03} and others, we have in mind the persistence of attractors or more general invariant sets, the robustness of exponential dichotomies, and other dynamical aspects of equation (\ref{introequation}). 

The paper is organized as follows:

\smallskip

In Section 2 we introduce the functional analytic setting for the linear elliptic problem and we prove the (singular) convergence of the resolvent of the operator $-\Delta+V_\beta(x)$ in $\R^3$ to the resolvent of the operator $-\Delta +I$ in $\Omega$ (with Dirichlet boundary condition) when $\beta\to+\infty$.

\smallskip

In Section 3 we study the behaviour of the spectrum of $-\Delta+V_\beta(x)$ as $\beta\to+\infty$. We show that as $\beta$ increases, the bottom of the essential spectrum moves towards $+\infty$ and more and more eigenvalues appear. The eigenvalues converge to the ones of the limit operator $-\Delta+I$ in $\Omega$ (with Dirichlet boundary condition), and so do the corresponding eigenfunctions. This spectral convergence can be used e.g. in studying exponential dichotomies for the linearization of (\ref{introequation}) near an equilibrium.

\smallskip

In Section 4 we introduce the functional analytic setting for the linear wave equation
\begin{equation}
\begin{aligned}
u_{tt}+\gamma u_t+ V_\beta(x)u-\Delta u&=0,&&(t,x)\in[0,+\infty[\times\R^3
\end{aligned}\end{equation}
and we prove that such equation generates a strongly continuous semigroup $T_\beta(t)$ in the space $H^1(\R^N)\times L^2(\R^3)$, with bounds independent of $\beta$.

\smallskip

In Section 5 we prove a singular Trotter-Kato theorem for the semigroups $T_\beta(t)$ as $\beta\to+\infty$, showing that they converge to the semigroup $T_\Omega(t)$ generated by the linear wave equation
\begin{equation}
\begin{aligned}
u_{tt}+\gamma u_t+u-\Delta u&=0,&&(t,x)\in[0,+\infty[\times\Omega\\u&=0,&&(t,x)\in[0,+\infty[\times\partial\Omega
\end{aligned}
\end{equation}

\smallskip

In Section 6 we finally prove that, as $\beta\to+\infty$, the solutions of the semilinear equation (\ref{introequation}) converge to the solutions of (\ref{introequation2}) uniformly on compact time intervals.


\section{Resolvent convergence} 

In this paper we denote by $|\cdot|$ the euclidean norm in $\R^N$. Let $ \Omega\subset\R^N$ be an open bounded set with $C^2$ -boundary and let $V\colon\R^N\to\R$ be a bounded measurable function. Throughout the paper we make the following assumption:

\begin{Hyp}\label{prop_V}
The function $V$ satisfies the following properties:
\begin{enumerate}
\item $0\leq V(x) \leq 1$ almost everywhere in $\R^N$;
\item $V(x)= 0$ almost everywhere in $\cl\Omega$;
\item $\essinf \{V(x)\mid x\in \cl{B_r(x_0)}\}>0$ for every closed ball $\cl{B_r(x_0)}\subset\R^N\setminus\cl\Omega$; 
\item $\esslim_{|x|\to\infty} V(x)=1$.
\end{enumerate}
\end{Hyp}
For every $\beta\geq0$ we define
\begin{equation}V_\beta(x):=1+\beta V(x),\quad x\in\R^N.\end{equation}

For $\beta\geq0$, in $H^1(\R^N)$ we define the following bilinear form:
\begin{equation}
a_\beta(u,v):=\int_{\R^N}\nabla u(x)\cdot\nabla v(x) \,dx+\int_{\R^N}V_\beta(x)u(x)v(x)\,dx,\quad u,v\in H^1(\R^N).
\end{equation}

It turns out that $a_\beta(\cdot,\cdot)$ is a scalar product in $H^1(\R^N)$, equivalent to the standard one. We shall denote this scalar product by $\langle \cdot,\cdot\rangle_{1,\beta}$. Setting $\|u\|_{1,\beta}:=\langle u,u\rangle_{1,\beta}^{1/2}$, we have:
\begin{equation}
\|u\|_{H^1(\R^N)}\leq\|u\|_{1,\beta}\leq(1+\beta)^{1/2}\|u\|_{H^1(\R^N)}
\end{equation}

The scalar product $\langle \cdot,\cdot\rangle_{1,\beta}$ induces a bounded linear map $\Lambda_\beta\colon H^1(\R^N)\to H^{-1}(\R^N)$ through the assignement
\begin{equation}
\langle \Lambda_\beta u,v\rangle_{H^{-1}(\R^N),H^1(\R^N)}:=\langle u,v\rangle_{1,\beta},\quad u,v\in H^1(\R^N).
\end{equation}
One can easily check that $\|\Lambda_\beta u\|_{-1,\beta}=\|u\|_{1,\beta}$, where $\|\cdot\|_{-1,\beta}$ is the dual norm in $H^{-1}(\R^N)$ relative to the norm $\|\cdot\|_{1,\beta}$ in $H^1(\R^N)$.
It follows from the Lax-Milgram theorem that $\Lambda_\beta$ is invertible. Therefore it is natural to equip $H^{-1}(\R^N)$ with the scalar product 
\begin{equation}
\langle u, v\rangle_{-1,\beta}:=\langle \Lambda_\beta^{-1}u,\Lambda_\beta^{-1} v\rangle_{1,\beta},
\end{equation}
which returns exacly the norm $\|\cdot\|_{-1,\beta}$ and makes $\Lambda_\beta$ an isometry.
The part of $\Lambda_\beta$ in $L^2(\R^N)$ defines a selfadjoint operator in $L^2(\R^N)$, which we denote by $A_\beta$. The domain of $A_\beta$ is $D(A_\beta)=\{u\in H^1(\R^N)\mid \Lambda_\beta u\in L^2(\R^N)\}$. Concretely, the operator $A_\beta$ acts according to the rule
\begin{equation}
\langle A_\beta u,v\rangle_{L^2(\R^N)}=a_\beta(u,v), \quad u\in D(A_\beta),v\in H^1(\R^N).
\end{equation}
By regularity theory for elliptic equations (see e.g. \cite{GilTrud}), it turns out that $D(A_\beta)=H^2(\R^N)$, and
\begin{equation}A_\beta u=-\Delta u+V_\beta(x) u,\quad u\in D(A_\beta).\end{equation}

We introduce the following closed subspace of $H^1(\R^N)$:
\begin{equation}
H^1_\Omega(\R^N):=\{u\in H^1(\R^N)\mid u(x)=0\,\, \text{a.e. in}\,\, \R^N\setminus\Omega\}
\end{equation}
Notice that $\langle u,v\rangle_{1,\beta}=\langle u,v\rangle_{H^1(\R^N)}$ whenever $u,v\in H^1_\Omega(\R^N)$, so $\|u\|_{1,\beta}=\|u\|_{H^1(\R^N)}$ for all $u\in H^1_\Omega(\R^N)$. Moreover, $H^1_\Omega(\R^N)$ can be naturally identified with $H^1_0(\Omega)$. 
We call $L^2_\Omega(\R^N)$ the closure of $H^1_\Omega(\R^N)$ in $L^2(\R^N)$, and we observe that 
\begin{equation}
L^2_\Omega(\R^N):=\{u\in L^2(\R^N)\mid u(x)=0\,\, \text{a.e. in}\,\, \R^N\setminus\Omega\},
\end{equation}
so we can naturally identify $L^2_\Omega(\R^N)$ with $L^2(\Omega)$. We denote by $H^{-1}_\Omega(\R^N)$ the dual space of $H^1_\Omega(\R^N)$ and we observe that $H^{-1}_\Omega(\R^N)$ can be naturally identified with $H^{-1}(\Omega)$.

The scalar product $\langle \cdot,\cdot\rangle_{H^1_\Omega(\R^N)}$ induces a bounded linear map $\Lambda_\Omega\colon H^1_\Omega(\R^N)\to H^{-1}_\Omega(\R^N)$ through the assignement
\begin{equation}
\langle \Lambda_\Omega u,v\rangle_{H^{-1}_\Omega(\R^N),H^1_\Omega(\R^N)}:=\langle u,v\rangle_{H^1_\Omega(\R^N)},\quad u,v\in H^1_\Omega(\R^N).
\end{equation}
One can easily check that $\|\Lambda_\Omega u\|_{H^{-1}_\Omega(\R^N)}=\|u\|_{H^1_\Omega(\R^N)}$, where $\|\cdot\|_{H^{-1}\Omega}$ is the dual norm in $H^{-1}_\Omega(\R^N)$ relative to the norm $\|\cdot\|_{H^1_\Omega(\R^N)}$ in $H^1_\Omega(\R^N)$.
It follows from the Lax-Milgram theorem that $\Lambda_\Omega$ is invertible. Therefore it is natural to equip $H^{-1}_\Omega(\R^N)$ with the scalar product 
\begin{equation}
\langle u, v\rangle_{H^{-1}_\Omega(\R^N)}:=\langle \Lambda_\Omega^{-1}u,\Lambda_\Omega^{-1} v\rangle_{H^1_\Omega(\R^N)},
\end{equation}
which returns exacly the norm $\|\cdot\|_{H^{-1}_\Omega}(\R^N)$ and makes $\Lambda_\Omega$ an isometry.
The part of $\Lambda_\Omega$ in $L^2_\Omega(\R^N)$ defines a selfadjoint operator in $L^2_\Omega(\R^N)$, which we denote by $A_\Omega$. The domain of $A_\Omega$ is $D(A_\Omega)=\{u\in H^1_\Omega(\R^N)\mid \Lambda_\Omega u\in L^2_\Omega(\R^N)\}$. Concretely, the operator $A_\Omega$ acts according to the rule
\begin{equation}
\langle A_\Omega u,v\rangle_{L^2_\Omega(\R^N)}=\langle u,v\rangle_{H^1_\Omega(\R^N)}, \quad u\in D(A_\Omega),v\in H^1_\Omega(\R^N).
\end{equation}
By regularity theory for elliptic equations it turns out that $D(A_\Omega)=\{u\in H^1_\Omega(\R^N)\mid u|_\Omega\in H^2(\Omega)\}$, which can be naturally identified with $H^1_0(\Omega)\cap H^2(\Omega)$, and
\begin{equation}(A_\Omega u) (x)=\begin{cases}-\Delta u (x)+u(x)&\text{$x\in\Omega$,}\\0&\text{$x\in\R^N\setminus\Omega$.}\end{cases}\quad \end{equation}
The operator $A_\Omega$ can be naturally identified with the operator $u\mapsto -\Delta u +u$ from $H^1_0(\Omega)\cap H^2(\Omega)$ to $L^2(\Omega)$. 

A straightforward application of the Lax-Milgram theorem guarantees that for all $\lambda>-1$ both $A_\beta+\lambda I$ and $A_\Omega+\lambda I$ possess a bounded inverse. We have the following convergence result:

\begin{Theorem} Let $(\beta_n)_{n\in\N}$ be a sequence of positive numbers, $\beta_n\to+\infty$ as $n\to\infty$. Let $(f_n)_{n\in\N}$ be a sequence in $L^2(\R^N)$, and assume that $f_n\to f$ strongly in $L^2(\R^N)$ as $n\to\infty$, where $f\in L^2_\Omega(\R^N)$. Let $\lambda>-1$ be fixed, and for every $n\in\N$ let $u_n\in H^1(\R^N)$ be the weak solution of the problem
\begin{equation}
A_{\beta_n}u+\lambda u=f_n.\label{ell1}
\end{equation}
Moreover, let $u_\Omega\in H^1_\Omega(\R^N)$ be the weak solution of the problem
\begin{equation}
A_\Omega u+\lambda u=f.\label{ell2}
\end{equation}
Then, up to a subsequence,
\begin{equation}
\lim_{n\to\infty}\|u_n-u_\Omega\|_{1,\beta_n}=0.\label{convergenza1}
\end{equation}
\end{Theorem}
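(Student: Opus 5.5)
We follow the standard argument for singular limits of quadratic forms: first a uniform bound in the $\beta_n$-dependent norm, then identification of the weak limit, and finally convergence of the norms to upgrade weak to strong convergence.

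\emph{Step 1: uniform bounds.} Testing (\ref{ell1}) with $u_n$ gives
\[
\|u_n\|_{1,\beta_n}^2+\lambda\|u_n\|_{L^2}^2=\langle f_n,u_n\rangle_{L^2}.
\]
Since $\|u\|_{L^2}\le\|u\|_{1,\beta}$, for $\lambda\ge0$ we get $\|u_n\|_{1,\beta_n}\le\|f_n\|_{L^2}$. For $-1<\lambda<0$ we use $\|u_n\|_{L^2}^2\le\|u_n\|_{1,\beta_n}^2$ to get $(1+\lambda)\|u_n\|_{1,\beta_n}\le\|f_n\|_{L^2}$. Hence $\|u_n\|_{1,\beta_n}\le C$ uniformly in $n$. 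In particular $(u_n)$ is bounded in $H^1(\R^N)$, and
\[
\beta_n\int_{\R^N}V u_n^2\,dx\le C^2 .
\]
Passing to a subsequence, $u_n\weakto u$ in $H^1(\R^N)$, and $u_n\to u$ in $L^2_{\loc}(\R^N)$ by Rellich. Fatou's lemma gives $\int V u^2\le\liminf_n\int Vu_n^2=0$, so $u=0$ a.e. where $V>0$. By Hypothesis \ref{prop_V}(3), every point of $\R^N\setminus\cl\Omega$ has a ball on which $V$ is essentially bounded below by a positive constant, so $u=0$ a.e. in $\R^N\setminus\cl\Omega$. Because $\partial\Omega$ is $C^2$, it has measure zero, and an $H^1$ function vanishing a.e. outside $\Omega$ belongs to $H^1_0(\Omega)$. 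This is the step I expect to need the most care: we must make sure that the limit lies in $H^1_\Omega(\R^N)$, which is exactly where the regularity of $\partial\Omega$ enters.

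\emph{Step 2: identification of the limit.} Take $v\in H^1_\Omega(\R^N)$ as a test function in (\ref{ell1}). Since $V=0$ on $\Omega$, we have $\langle u_n,v\rangle_{1,\beta_n}=\langle u_n,v\rangle_{H^1}$, so the weak formulation reads
\[
\langle u_n,v\rangle_{H^1}+\lambda\langle u_n,v\rangle_{L^2}=\langle f_n,v\rangle_{L^2}.
\]
Letting $n\to\infty$ gives
\[
\langle u,v\rangle_{H^1_\Omega}+\lambda\langle u,v\rangle=\langle f,v\rangle\qquad\text{for all } v\in H^1_\Omega(\R^N).
\]
By uniqueness in the Lax--Milgram theorem, $u=u_\Omega$.

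\emph{Step 3: strong convergence.} Since $u_\Omega\in H^1_\Omega(\R^N)$, the identity $\langle u_n,u_\Omega\rangle_{1,\beta_n}=\langle u_n,u_\Omega\rangle_{H^1}$ holds, and therefore
\[
\|u_n-u_\Omega\|_{1,\beta_n}^2=\|u_n\|_{1,\beta_n}^2-2\langle u_n,u_\Omega\rangle_{H^1}+\|u_\Omega\|_{H^1}^2 .
\]
By weak convergence the middle term tends to $-2\|u_\Omega\|^2_{H^1}$. For the first term, the energy identity gives
\[
\|u_n\|_{1,\beta_n}^2=\langle f_n,u_n\rangle-\lambda\|u_n\|^2_{L^2}.
\]
The difficulty is that we only know $u_n\to u_\Omega$ in $L^2_{\loc}$, not globally in $L^2(\R^N)$. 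To get global convergence, use Hypothesis \ref{prop_V}(4): pick $R$ with $\Omega\subset B_R$ and $V\ge1/2$ a.e. outside $B_R$. Then
\[
\int_{|x|>R}u_n^2\le\frac{2}{\beta_n}\int Vu_n^2\le\frac{2C^2}{\beta_n}\to0,
\]
so the mass outside $B_R$ vanishes. Combined with the strong convergence $u_n\to u_\Omega$ in $L^2(B_R)$, this gives $u_n\to u_\Omega$ strongly in $L^2(\R^N)$. Consequently
\[
\|u_n\|_{1,\beta_n}^2\to\langle f,u_\Omega\rangle-\lambda\|u_\Omega\|^2=\|u_\Omega\|_{H^1}^2,
\]
where the last equality is (\ref{ell2}) tested with $u_\Omega$. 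Substituting into the expansion above yields $\|u_n-u_\Omega\|_{1,\beta_n}\to0$, which is (\ref{convergenza1}). Since the limit is unique, the convergence in fact holds for the whole sequence, but the subsequence statement suffices.
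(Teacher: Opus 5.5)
Your proof is correct. Steps 1--2 coincide with the paper's argument: the uniform bound from testing with $u_n$, the weak limit in $H^1(\R^N)$, vanishing outside $\Omega$ from $\beta_n\int V|u_n|^2\le C^2$ and Hypothesis~\ref{prop_V}(3), and identification of the limit via test functions in $H^1_\Omega(\R^N)$. The only real difference is in Step 3.

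The paper expands the shifted quantity $\|u_n-u_\Omega\|_{1,\beta_n}^2+\lambda\|u_n-u_\Omega\|_{L^2}^2$. By the energy identity, its $u_n$-part is exactly $\int f_nu_n\,dx$, which tends to $\int fu_\Omega\,dx$ by strong-times-weak convergence. So only weak convergence of $u_n$ is needed, and the condition at infinity, Hypothesis~\ref{prop_V}(4), is never used. For $-1<\lambda<0$ the shifted quantity still dominates $(1+\lambda)\|u_n-u_\Omega\|_{1,\beta_n}^2$, which is presumably the ``minor change'' the paper mentions.

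You instead expand $\|u_n-u_\Omega\|_{1,\beta_n}^2$ alone. This forces you to prove $\|u_n\|_{L^2}\to\|u_\Omega\|_{L^2}$ first, which you do by tail control via Hypothesis~\ref{prop_V}(4). That is the same device the paper uses in the proof of Theorem~\ref{convergenzaspettrale}, where it is genuinely needed. Your route is therefore valid but spends a hypothesis that the resolvent statement does not require. Global $L^2$ convergence actually follows from the theorem itself, since $\|\cdot\|_{L^2}\le\|\cdot\|_{1,\beta_n}$.

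In exchange, your write-up treats $-1<\lambda<0$ explicitly. It also makes visible that $|\partial\Omega|=0$ is what places the limit in $H^1_\Omega(\R^N)$. The paper passes over both points. One small detail: your Fatou step needs a.e.\ convergence, which is available along a further subsequence. Alternatively, use weak lower semicontinuity of $u\mapsto\int V u^2$ on $L^2$.
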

\begin{proof} For ease of notation, we assume that $\lambda\geq 0$: when $-1<\lambda<0$ only a few minor changes are needed.  For  every $n\in\N$ and $\phi\in H^1(\R^N)$ we have
\begin{equation}
\int_{\R^N}\nabla u_n\cdot\nabla\phi\,dx+(1+\lambda)\int_{\R^N}u_n\phi\,dx+\beta_n\int_{\R^N}V(x)u_n\phi\,dx=\int_{\R^N}f_n\phi\,dx\label{debole}
\end{equation}
Choosing $\phi=u_n$ we get
\begin{equation}
\|u_n\|_{H^1(\R^N)}^2+\lambda\|u_n\|_{L^2(\R^N)}^2+\beta_n \int_{\R^N}V(x)|u_n|^2\,dx\leq \|f_n\|_{L^2(\R^N)}\|u_n\|_{L^2(\R^N)}\label{stima1}
\end{equation}
Setting $K:=\sup_n\|f_n\|_{L^2(\R^N)}$, we obtain that
\begin{equation}
\|u_n\|_{H^1(\R^N)}\leq K\quad\text{for all $n\in\N$.}\end{equation}
It follows by the Banach-Alaoglu theorem that there exists $u_\infty\in H^1(\R^N)$ such that, up to a subsequence,
\begin{equation}
u_n\weakto u_\infty\quad\text{in $H^1(\R^N)$ as $n\to\infty$.}
\end{equation}
By Rellich's theorem we have that
\begin{equation}
u_n\to u_\infty \quad\text{in $L^2_{\loc}(\R^N)$ as $n\to\infty$.}
\end{equation}
By (\ref{stima1}) we get that
\begin{equation}
\beta_n \int_{\R^N}V(x)|u_n|^2\,dx\leq K^2\quad\text{for all $n\in\N$.}
\end{equation}
Let $\cl{B_r(x_0)}$ be a closed ball contained in $\R^N\setminus\cl\Omega$. Then we have
\begin{equation}\left(\essinf_{\cl{B_r(x_0)}} V(x)\right)\int_{\cl{B_r(x_0)}}|u_n|^2\,dx\leq\frac{K^2}{\beta_n}\to 0\quad\text{as $n\to\infty$.}\end{equation}
It follows that $u_\infty(x)=0$ a.e. in $\R^N\setminus\Omega$, i.e. $u_\infty\in H^1_\Omega(\Bbb R^N)$. Now choosing $\phi\in H^1_\Omega(\R^N)$ in (\ref{debole}) we see that the term $\beta_n\int_{\R^N}V(x)u_n\phi\,dx$ disappears and passing to the limit for $n\to\infty$ we  get
\begin{equation}
\int_{\R^N}\nabla u_\infty\cdot\nabla\phi\,dx+(1+\lambda)\int_{\R^N}u_\infty\phi\,dx=\int_{\R^N}f\phi\,dx.
\end{equation}
Therefore $u_\infty$ is a weak solution of (\ref{ell1}), so $u_\infty=u_\Omega$. In order to complete the proof, we need to prove (\ref{convergenza1}). To this end, we observe that
\begin{multline*}
\|u_n-u_\Omega\|_{1,\beta_n}^2+\lambda \|u_n-u_\Omega\|_{L^2(\R^N)}^2\\
=\int_{\R^N}|\nabla(u_n-u_\Omega)|^2\,dx+
(1+\lambda)\int_{\R^N}|u_n-u_\Omega|^2\,dx+\beta_n\int_{\R^N}V(x)|u_n-u_\Omega|^2\,dx\\
=\int_{\R^N}|\nabla u_n|^2\,dx+
(1+\lambda)\int_{\R^N}|u_n|^2\,dx+\beta_n\int_{\R^N}V(x)|u_n|^2\,dx\\
-2\int_{R^N}\nabla u_n\cdot\nabla u_\Omega\,dx-2(1+\lambda)\int_{\R^N}u_nu_\Omega\,dx\\
+\int_{\R^N}|\nabla u_\Omega|^2\,dx+
(1+\lambda)\int_{\R^N}|u_\Omega|^2\,dx\\
=\int_{R^N}f_nu_n\,dx-2\int_{R^N}\nabla u_n\cdot\nabla u_\Omega\,dx-2(1+\lambda)\int_{\R^N}u_nu_\Omega\,dx+\int_{\R^N}fu_\Omega\,dx.
\end{multline*}
Since $u_n\weakto u_\Omega$ in $H^1(\R^N)$ and $f_n\to f$ in $L^2(\R^N)$ as $n\to\infty$, we get
\begin{multline*}
\lim_{n\to\infty}\left(\|u_n-u_\Omega\|_{1,\beta_n}^2+\lambda \|u_n-u_\Omega\|_{L^2(\R^N)}^2\right)\\
=\int_{\R^N} fu_\Omega\,dx-2\left(\int_{\R^N}|\nabla u_\Omega|^2\,dx+(1+\lambda)\int_{\R^N}|u_\Omega|^2\,dx\right)+\int_{\R^N} fu_\Omega\,dx=0,
\end{multline*}
and this completes the proof.
\end{proof}
We can rephrase the above result in the following way:
\begin{Corol}\label{resolvconverg} Let $(\beta_n)_{n\in\N}$ be a sequence of positive numbers, $\beta_n\to+\infty$ as $n\to\infty$. Let $(f_n)_{n\in\N}$ be a sequence in $L^2(\R^N)$, and assume that $f_n\to f$ strongly in $L^2(\R^N)$ as $n\to\infty$, where $f\in L^2_\Omega(\R^N)$. Let $\lambda>-1$ be fixed. Then
\begin{equation}
\|(-A_{\beta_n}-\lambda I)^{-1}f_n-(-A_{\Omega}-\lambda I)^{-1}f\|_{1,\beta_n}\to 0\quad\text{as $n\to\infty$.}
\end{equation}
\end{Corol}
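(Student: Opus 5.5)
The Corollary is a reformulation of the preceding Theorem, with two points to settle: the notation, and the removal of the phrase ``up to a subsequence''. For the notation, I read $(-A_{\beta_n}-\lambda I)^{-1}f_n$ as the unique weak solution $u_n$ of $A_{\beta_n}u+\lambda u=f_n$, and $(-A_\Omega-\lambda I)^{-1}f$ as $u_\Omega$. The overall sign convention is irrelevant, since the statement concerns the norm of a difference and changing both signs does not change that norm. With this reading, the quantity in the Corollary is exactly $\|u_n-u_\Omega\|_{1,\beta_n}$, and the Theorem gives convergence to zero along some subsequence.

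To get convergence of the whole sequence I will use the standard subsequence principle. Suppose the claim fails. Then there are $\varepsilon>0$ and a subsequence $(n_k)$ with $\|u_{n_k}-u_\Omega\|_{1,\beta_{n_k}}\ge\varepsilon$ for all $k$. The data $\beta_{n_k}\to+\infty$ and $f_{n_k}\to f$ in $L^2(\R^N)$, with $f\in L^2_\Omega(\R^N)$, satisfy every hypothesis of the Theorem. Applying the Theorem to this subsequence produces a further subsequence along which $\|u_{n_k}-u_\Omega\|_{1,\beta_{n_k}}\to0$. This contradicts the lower bound $\varepsilon$.

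The only point that needs care is that the limit in the Theorem does not depend on which subsequence is extracted. This holds because the proof of the Theorem identifies the weak limit of any extracted subsequence as $u_\Omega$, using uniqueness of the weak solution of $A_\Omega u+\lambda u=f$ given by Lax--Milgram in $H^1_\Omega(\R^N)$. The norm convergence along that subsequence is then proved toward this fixed $u_\Omega$. Since the target is the same for every subsequence, the contradiction argument above is valid. I expect no real obstacle beyond checking this point.
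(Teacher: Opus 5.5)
Your proposal is correct and follows the paper's route. The paper presents the Corollary as a direct rephrasing of the preceding Theorem. Your subsequence-of-a-subsequence argument is valid because the target $u_\Omega$ is fixed in the Theorem's statement and does not depend on the extracted subsequence, and it supplies the step the paper leaves implicit when it silently drops ``up to a subsequence''. Strictly speaking $(-A_{\beta_n}-\lambda I)^{-1}f_n=-u_n$ rather than $u_n$, but, as you note, this only flips the sign of the difference and leaves the norm unchanged.
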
\hfill$\square$

Several important questions arising in the study of hyperbolic or parabolic evolution equations involving elliptic operators require a precise knowledge of the placement of their eigenvalues and of the shape of the corresponding eigenfunction. For example, the ``degree of instability'' of a stationary point is measured by the number of positive eigenvalues of the linearized equation; the existence of an exponential dichotomy for a semigroups depends on the placement of the eigenvalues of its generator, etc. With these kind of applications in mind, in the next section we prove a spectral convergence result for the family $(A_\beta)_{\beta\geq 0}$ as $\beta\to\infty$.


\section{Spectral convergence}

Since $\Omega$ is bounded and has regular boundary, Rellich's theorem implies that $A_\Omega$ has compact resolvent and hence its spectrum $\sigma(A_\Omega)$ consists of a non-decreasing sequence  of real eigenvalues with finite multiplicity, diverging to $+\infty$.
Let
\begin{equation}0<\lambda_1\leq\lambda_2\leq\lambda_3\leq\dots\,,\quad \lambda_k\to+\infty \quad\text{as $k\to\infty$,}\end{equation}
be the sequence of the eigenvalues of $A_\Omega$, repeated according to their (finite) multiplicity. For every $k\in\N$ let $\varphi_k\in H^1_\Omega(\R^N)$ be an eigenfunction for the eigenvalue $\lambda_k$, that is
\begin{equation}
A_\Omega \varphi_k=\lambda_k \varphi_k, \quad k\in\N,
\end{equation}
with $\|\varphi_k\|_{L^2_\Omega(\R^N)}=1$ and $\langle \varphi_i,\varphi_j\rangle_{L^2_\Omega(\R^N)}=0$ for $i\not=j$. 
Then (see e.g. \cite[Ch. 6]{R-T}) $(\varphi_k)_{k\in\N}$ is a complete orthonormal system in $L^2_\Omega(\R^N)$ and the following recursive formula holds for $\lambda_k$ and $\varphi_k$:
\begin{multline}
\lambda_k=\int_{\R^N}|\nabla \varphi_k|^2\,dx+\int_{\R^N}|\varphi_k|^2\,dx\\=\min_{\substack{v\in H^1_\Omega,\|v\|_{L^2_\Omega}=1\\ v\perp\span\langle \varphi_1,\dots, \varphi_{k-1}\rangle}}
\left(\int_{\R^N}|\nabla v|^2\,dx+\int_{\R^N}|v|^2\,dx\right),\quad k=1,2,3,\dots\, ,\label{recursive}\end{multline}
where $\perp$ denotes orthogonality with respect to the scalar product in  $L^2_\Omega(\R^N)$. Moreover, whenever $(\tilde\lambda_k)_{k_\N}$ is a sequence of real numbers and $(\tilde \varphi_k)_{k\in\N}$ is a sequence of functions in $H^1_\Omega(\R^N)$, with $\|\tilde \varphi_k\|_{L^2_\Omega(\R^N)}=1$ for all $k\in\N$ and $\langle \tilde \varphi_i,\tilde \varphi_j\rangle_{L^2_\Omega(\R^N)}=0$ for $i\not=j$,  such that the recursive formula (\ref{recursive}) holds with $\lambda_k$ replaced by $\tilde\lambda_k$ and $\varphi_k$ replaced by $\tilde \varphi_k$, then  $\tilde\lambda_k=\lambda_k$ for all $k$, $A_\Omega \tilde \varphi_k=\lambda_k \tilde \varphi_k$ for all $k$, and $(\tilde \varphi_k)_{k\in\N}$ is a complete orthonormal system in $L^2_\Omega(\R^N)$.
The following (non recursive) formulas are also very important:
\begin{equation}
\lambda_k=\max_{\substack{U\subset H^1_\Omega\\\dim U\leq k-1}}\,\min_{\substack{v\in H^1_\Omega,\|v\|_{L^2_\Omega}=1\\ v\perp U}}
\left(\int_{\R^N}|\nabla v|^2\,dx+\int_{\R^N}|v|^2\,dx\right),\quad k=1,2,3,\dots \label{nonrecursive1}
\end{equation}
and
\begin{equation}
\lambda_k=\min_{\substack{U\subset H^1_\Omega\\\dim U=k}}\,\max_{\substack{\|v\|_{L^2_\Omega}=1\\v\in U}}\left(\int_{\R^N}|\nabla v|^2\,dx+\int_{\R^N}|v|^2\,dx\right)\quad k=1,2,3,\dots\label{nonrecursive2}
\end{equation}
where $U$ denotes a subspace of $H^1_\Omega(\R^N)$ and $\perp$ denotes orthogonality with respect to the scalar product in  $L^2_\Omega(\R^N)$.  In (\ref{nonrecursive1}) for every $k$ the maximum is attained at $U=\span\langle \varphi_1,\dots,\varphi_{k-1}\rangle$, and the minimum in $U^\perp$ is attained at $v=\varphi_k$. In (\ref{nonrecursive2}) the minimum is attained at $U=\span\langle \varphi_1,\dots,\varphi_{k}\rangle$, and the maximum within $U$ is attained at $v=\varphi_k$.

The operators $A_\beta$ do not have compact resolvent, so the situation changes drastically. The spectrum $\sigma(A_\beta)$ of $A_\beta$ is contained in $[1,+\infty[$, while the essential spectrum $\sigma_{\rm ess}(A_\beta)$ is contained in $[1+\beta, +\infty[$, due to Weyl's theorem (see \cite[pp. 112-113]{RS-4}). Denoting by $\sigma_{\beta,0}$ the bottom of $\sigma_{\rm ess}(A_\beta)$, the part of the spectrum of $A_\beta$ below $\sigma_{\beta,0}$ consists either of a finite number of eigenvalues with finite multiplicity or of a non-decreasing sequence of eigenvalues with finite multiplicity, converging to the bottom of the essential spectrum. 

The eigenvalues (and the corresponding eigenfunctions) of $A_\beta$ are characterized in the following way (see \cite[Th. XIII.1]{RS-4}):
 \begin{Theorem}\label{charact}For $k=1,2,3,\dots$, define the numbers
\begin{equation}
\lambda_{\beta,k}:=\sup_{\substack{U\subset H^1\\\dim U\leq k-1}}\,\inf_{\substack{v\in H^1,\|v\|_{L^2}=1\\ v\perp U}} \left(\int_{\R^N}|\nabla v|^2\,dx+\int_{\R^N}V_\beta(x)|v|^2\,dx\right),\label{proper1}
\end{equation}
where $U$ denotes a subspace of $H^1(\R^N)$. Then $\lambda_{\beta,1}\leq\lambda_{\beta,2}\leq\lambda_{\beta,3}\leq\dots$ and for every $k\in\N$ one has that either
\begin{itemize}
\item[a)] $\lambda_{\beta,k}<\sigma_{\beta,0}$, and then $A_\beta$ has at least $k$ eigenvalues (counting multiplicity) below $\sigma_{\beta,0}$ and $\lambda_{\beta,k}$ is the $k$-th eigenvalue;
\end{itemize}
or
\begin{itemize}
\item[b)] $\lambda_{\beta,k}=\sigma_{\beta,0}$, and then $\lambda_{\beta,k}=\lambda_{\beta,k+1}=\lambda_{\beta,k+2}=\dots$ and there are at most $k-1$ eigenvalues of $A_\beta$ below $\sigma_{\beta,0}$.
\end{itemize}
\end{Theorem}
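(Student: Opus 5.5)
This is the min--max principle for the self-adjoint operator $A_\beta$, i.e. \cite[Th. XIII.1]{RS-4}, applied with quadratic form $q_\beta(v)=a_\beta(v,v)$ and form domain $H^1(\R^N)$. The plan is to reduce to that abstract statement and then sketch its proof via the spectral theorem.

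\emph{Reduction to the abstract setting.} First I would verify that $A_\beta$ is self-adjoint and bounded below, $A_\beta\geq I$. Next I would check that its form domain is exactly $H^1(\R^N)$, since $a_\beta$ is a closed form there, equivalent to the standard $H^1$ norm. Finally, the sup--inf over subspaces of $H^1$ in (\ref{proper1}) equals the one over subspaces of $L^2$ with $v$ ranging over the form domain. The reason is that orthogonality is taken in $L^2$, and a subspace $U\subset L^2$ may be replaced by its orthogonal projection structure without changing the value.

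\emph{Monotonicity.} This follows because enlarging the admissible dimension of $U$ can only increase the supremum.

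\emph{Alternative a) or b).} Let $E$ be the spectral measure of $A_\beta$ and $\mu=\sigma_{\beta,0}$.
\begin{itemize}
\item[(i)] \emph{Upper bound.} If $\dim\operatorname{Ran}E(-\infty,\mu+\epsilon)$ is at least $k$ for every $\epsilon>0$, then $\lambda_{\beta,k}\leq\mu$. The argument: given any $U$ with $\dim U\leq k-1$, pick a unit $v$ in a $k$-dimensional subspace of $\operatorname{Ran}E(-\infty,\mu+\epsilon)$ with $v\perp U$. Then $q_\beta(v)\leq\mu+\epsilon$.
\item[(ii)] \emph{Identification below $\mu$.} If $A_\beta$ has eigenvalues $e_1\leq\dots\leq e_m<\mu$ (counted with multiplicity) and nothing else in the spectrum below $\mu$, then $\lambda_{\beta,k}=e_k$ for $k\leq m$. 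For $k\leq m$:
\begin{itemize}
\item[] taking $U$ spanned by the first $k-1$ eigenvectors forces $q_\beta(v)\geq e_k$ on $U^\perp$, which gives $\lambda_{\beta,k}\geq e_k$;
\item[] for any $U$ of dimension at most $k-1$, some unit $v$ in the span of the first $k$ eigenvectors is orthogonal to $U$ and has $q_\beta(v)\leq e_k$, which gives $\lambda_{\beta,k}\leq e_k$.
\end{itemize}
\item[(iii)] \emph{The case $k>m$.} Here $\lambda_{\beta,k}\geq\mu$, using $U$ equal to the span of all $m$ eigenvectors. Indeed, $U^\perp$ then lies in $\operatorname{Ran}E[\mu,\infty)$, where $q_\beta\geq\mu$. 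Since $\mu$ is in the essential spectrum, $\operatorname{Ran}E(\mu-\epsilon,\mu+\epsilon)$ is infinite dimensional, so (i) yields equality $\lambda_{\beta,k}=\mu$ for all $k>m$.
\end{itemize}
Combining (ii) and (iii) gives the dichotomy. If $m$ is infinite, eigenvalues accumulate only at $\mu$ and case b) never occurs.

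\emph{Main obstacle.} The delicate point is the variational bookkeeping with $v$ restricted to the form domain $H^1$ instead of $D(A_\beta)$. I would handle it by density of $D(A_\beta)$ in $H^1$ for the form norm, together with the fact that spectral subspaces $\operatorname{Ran}E(-\infty,c)$ lie in $D(A_\beta)\subset H^1$. Also needed is $\mu<\infty$, which holds since the essential spectrum of $A_\beta$ is nonempty: Weyl sequences can be built at infinity, where $V_\beta\to1+\beta$.
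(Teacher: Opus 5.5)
Your proposal is correct and follows the paper, which gives no argument of its own beyond invoking the min--max principle \cite[Th. XIII.1]{RS-4} for $A_\beta$ with form domain $H^1(\R^N)$. Your spectral-measure steps (i)--(iii), together with the check that $\sigma_{\rm ess}(A_\beta)\neq\emptyset$, are the standard proof of that cited result; the vague ``orthogonal projection structure'' reduction is not needed, since the upper bounds hold for arbitrary $U$ and the lower-bound test spaces are spans of eigenvectors, which already lie in $D(A_\beta)\subset H^1(\R^N)$.
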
\hfill$\square$

The following formula also holds (see \cite[Th. XIII.3]{RS-4}):
\begin{equation}
\lambda_{\beta,k}=\inf_{\substack{U\subset H^1\\\dim U=k}}\,\max_{\substack{\|v\|_{L^2}=1\\v\in U}}\left(\int_{\R^N}|\nabla v|^2\,dx+\int_{\R^N}V_\beta(x)|v|^2\,dx\right)\quad k=1,2,3,\dots\label{proper2}
\end{equation}
where $U$ denotes a subspace of $H^1(\R^N)$.

If $\lambda_{\beta,\bar k}<\sigma_{\beta,0}$, then for $k=1$, \dots, $\bar k$, the supremum and the infimum in (\ref{proper1}) are actually a minimum and a maximum, and the following recursive formula holds (see \cite[pp. 241-242]{L-L}):
\begin{multline}
\lambda_{\beta, k}=\int_{\R^N}|\nabla \varphi_{\beta,k}|^2\,dx+\int_{\R^N}V_\beta(x)|\varphi_{\beta,k}|^2\,dx\\=\min_{\substack{v\in H^1,\|v\|_{L^2}=1\\ v\perp\span\langle \varphi_{\beta,1},\dots, \varphi_{\beta,k-1}\rangle}}
\left(\int_{\R^N}|\nabla v|^2\,dx+\int_{\R^N}V_\beta(x)|v|^2\,dx\right),\quad k=1,2,3,\dots,\bar k ,\label{proper-recursive}\end{multline}
where $\perp$ denotes orthogonality with respect to the scalar product in  $L^2(\R^N)$, and for each $k$ the function $\varphi_{\beta,k}$ is an eigenfunction of $\lambda_{\beta,k}$. Moreover, whenever $\tilde\lambda_{\beta,k}$, $k=1$, \dots, $\bar k$, are numbers and $\tilde \varphi_{\beta,k}$, $k=1$, \dots, $\bar k$, are functions in $H^1(\R^N)$ with $\|\tilde \varphi_{\beta,k}\|_{L^2_\Omega(\R^N)}=1$ and $\langle \tilde \varphi_{\beta,i},\tilde \varphi_{\beta,j}\rangle_{L^2_\Omega(\R^N)}=0$ for $i\not=j$, such that the recursive formula (\ref{proper-recursive}) holds with $\lambda_{\beta,k}$ replaced by $\tilde\lambda_{\beta,k}$ and $\varphi_{\beta,k}$ replaced by $\tilde \varphi_{\beta,k}$, then  $\tilde\lambda_{\beta,k}=\lambda_{\beta,k}$ and  $A_\beta \tilde \varphi_{\beta,k}=\lambda_{\beta,k} \tilde \varphi_{\beta,k}$ for $k=1$, \dots, $\bar k$. If $\lambda_{\beta,\cl k}<\lambda_{\beta,\cl k+1}$, then $\span\langle \tilde \varphi_{\beta,1},\dots,\tilde \varphi_{\beta,\bar k}\rangle=\span\langle  \varphi_{\beta,1},\dots,\varphi_{\beta,\bar k}\rangle$.

We have the following spectral convergence result:
\begin{Theorem}\label{convergenzaspettrale} Let $(\beta_n)_{n\in\N}$ be a sequence of positive numbers, $\beta_n\to+\infty$ as $n\to\infty$. Let $\bar k\in\N$ and let $\bar n\in\N$ be such that $\beta_n>\lambda_{\bar k}$ for all $n\geq \bar n$. Then the following statements hold true:
\begin{enumerate}
\item for all $n\geq\bar n$, $\sigma_{\beta_n,0}\geq\lambda_{\bar k}+1$;
\item for all $n\geq\bar n$, $\lambda_{\beta_n,1}$, \dots, $\lambda_{\beta_n,\bar k}$ are eigenvalues of $A_{\beta_n}$, with a corresponding family of eigenfunctions $\varphi_{\beta_n,1}$, \dots, $\varphi_{\beta_n,\bar k}$ satisfying the recursive formula (\ref{proper-recursive});
\item for $k=1$,\dots,$\bar k$, $\lambda_{\beta_n,k}\to\lambda_k$ as $n\to\infty$;
\item there exist $\tilde \varphi_1$, \dots,$\tilde \varphi_{\bar k}\in H^1_\Omega(\R^N)$, with $\|\tilde \varphi_k\|_{L^2_\Omega(\R^N)}=1$ for $k=1$,\dots,$\bar k$ and $\langle \tilde \varphi_i,\tilde \varphi_j\rangle_{L^2_\Omega(\R^N)}=0$ for $i\not=j$, satisfying
the recursive formula (\ref{recursive}) with $\varphi_{k}$ replaced by $\tilde \varphi_{k}$ for $k=1$,\dots,$\bar k$ (so $A_\Omega \tilde \varphi_k=\lambda_k\tilde \varphi_k$), such that, up to a subsequence, $\|\varphi_{\beta_n,k}-\tilde \varphi_k\|_{\beta_n,1}\to 0$ as $n\to\infty$.
\end{enumerate}
\end{Theorem}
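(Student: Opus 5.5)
The plan has two parts. Parts (1)–(2) follow from comparing the min-max formulas for $A_{\beta}$ and $A_\Omega$. Parts (3)–(4) come from a compactness argument on the eigenfunctions $\varphi_{\beta_n,k}$, in the spirit of the proof of the resolvent convergence theorem of Section 2. Write $Q_\beta(v):=\int_{\R^N}|\nabla v|^2\,dx+\int_{\R^N}V_\beta|v|^2\,dx$ and $Q(v):=\int_{\R^N}|\nabla v|^2\,dx+\int_{\R^N}|v|^2\,dx$.

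\emph{Steps (1) and (2).} By Weyl's theorem, as recalled before Theorem \ref{charact}, $\sigma_{\rm ess}(A_{\beta_n})\subset[1+\beta_n,+\infty[$. Hence for $n\geq\bar n$ we get $\sigma_{\beta_n,0}\geq 1+\beta_n> 1+\lambda_{\bar k}$, which is (1). Next, $H^1_\Omega(\R^N)\subset H^1(\R^N)$ and $V_\beta\equiv1$ on $\Omega$, so $Q_\beta=Q$ on $H^1_\Omega$. Comparing (\ref{proper2}) with (\ref{nonrecursive2}) then gives $\lambda_{\beta,k}\leq\lambda_k$ for every $k$ and every $\beta$. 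Therefore $\lambda_{\beta_n,k}\leq\lambda_{\bar k}<\sigma_{\beta_n,0}$ for $k\leq\bar k$. Alternative a) of Theorem \ref{charact} and the recursive formula (\ref{proper-recursive}) then yield (2).

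\emph{Compactness.} For $k\leq\bar k$ we have $Q_{\beta_n}(\varphi_{\beta_n,k})=\lambda_{\beta_n,k}\leq\lambda_{\bar k}$. So the family $\varphi_{\beta_n,k}$ is bounded in $H^1$, and $\beta_n\int V|\varphi_{\beta_n,k}|^2\,dx\leq\lambda_{\bar k}$. Passing to a subsequence, $\lambda_{\beta_n,k}\to\mu_k\leq\lambda_k$ and $\varphi_{\beta_n,k}\weakto\tilde\varphi_k$ in $H^1$. Exactly as in Section 2, the limit satisfies $\tilde\varphi_k\in H^1_\Omega(\R^N)$.

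The key additional point is \emph{strong} $L^2$ convergence, because we must keep unit norms and orthogonality in the limit. By Hypothesis \ref{prop_V}(4) there is $R$ with $V\geq 1/2$ a.e. for $|x|>R$. Then
\[
\int_{|x|>R}|\varphi_{\beta_n,k}|^2\,dx\leq 2\lambda_{\bar k}/\beta_n\to0 .
\]
On $B_R$ Rellich's theorem gives strong convergence. Hence $\varphi_{\beta_n,k}\to\tilde\varphi_k$ in $L^2$, and the $\tilde\varphi_k$ are orthonormal. By weak lower semicontinuity and $V\ge0$, we also get $Q(\tilde\varphi_k)\leq\liminf Q_{\beta_n}(\varphi_{\beta_n,k})=\mu_k$.

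\emph{Identifying the limit; this is the main step.} I will show by induction on $k$ that $\mu_k=Q(\tilde\varphi_k)$ equals the minimum of $Q$ over unit vectors $v\in H^1_\Omega$ orthogonal to $\tilde\varphi_1,\dots,\tilde\varphi_{k-1}$. Fix such a $v$. Set
\[
v_n:=v-\sum_{j<k}\langle v,\varphi_{\beta_n,j}\rangle\varphi_{\beta_n,j}.
\]
Then $v_n\perp\varphi_{\beta_n,1},\dots,\varphi_{\beta_n,k-1}$, so $\lambda_{\beta_n,k}\|v_n\|^2_{L^2}\leq Q_{\beta_n}(v_n)$. The coefficients satisfy $\langle v,\varphi_{\beta_n,j}\rangle\to\langle v,\tilde\varphi_j\rangle=0$ by the strong $L^2$ convergence. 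The remaining terms are controlled as follows:
\begin{itemize}
\item $Q_{\beta_n}(v)=Q(v)$, since $v\in H^1_\Omega$;
\item $Q_{\beta_n}(\varphi_{\beta_n,j})=\lambda_{\beta_n,j}$ is bounded;
\item the cross terms are $a_{\beta_n}(v,\varphi_{\beta_n,j})=\lambda_{\beta_n,j}\langle v,\varphi_{\beta_n,j}\rangle\to0$, using that $\varphi_{\beta_n,j}$ is an eigenfunction.
\end{itemize}
Hence $\|v_n\|_{L^2}\to1$ and $Q_{\beta_n}(v_n)\to Q(v)$, which gives $\mu_k\leq Q(v)$. Since $\tilde\varphi_k$ itself is admissible, $Q(\tilde\varphi_k)\leq\mu_k\leq\min Q=Q(\tilde\varphi_k)$. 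So the $\tilde\varphi_k$ satisfy (\ref{recursive}) with values $\mu_k$. The uniqueness statement after (\ref{recursive}) then gives $\mu_k=\lambda_k$ and $A_\Omega\tilde\varphi_k=\lambda_k\tilde\varphi_k$. (That uniqueness is stated for full sequences; for finitely many indices the same argument applies.) Every subsequence has a further subsequence along which $\lambda_{\beta_n,k}\to\lambda_k$, so (3) holds for the whole sequence.

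\emph{Norm convergence in (4).} Expand, using $a_{\beta_n}(\varphi_{\beta_n,k},\tilde\varphi_k)=\lambda_{\beta_n,k}\langle\varphi_{\beta_n,k},\tilde\varphi_k\rangle$:
\[
\|\varphi_{\beta_n,k}-\tilde\varphi_k\|_{1,\beta_n}^2=\lambda_{\beta_n,k}-2\lambda_{\beta_n,k}\langle\varphi_{\beta_n,k},\tilde\varphi_k\rangle_{L^2}+Q(\tilde\varphi_k).
\]
This tends to $\lambda_k-2\lambda_k+\lambda_k=0$.
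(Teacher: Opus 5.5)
Your proof is correct. Steps (1)--(2), the compactness argument with the tail estimate giving strong $L^2$ convergence, and the final expansion of $\|\varphi_{\beta_n,k}-\tilde\varphi_k\|_{1,\beta_n}^2$ are essentially as in the paper. Where you depart from the paper is in how you identify the limit.

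The paper inducts on $\ell$:
\begin{itemize}
\item the bounds $\lambda_{\beta_n,\ell-1}\le\lambda_{\beta_n,\ell}\le\lambda_\ell$, together with the inductive hypothesis, trap the limit $\zeta$ in $[\lambda_{\ell-1},\lambda_\ell]$;
\item testing the eigenvalue equation of $A_{\beta_n}$ against $\phi\in H^1_\Omega(\R^N)$, where the $\beta_n V$ term vanishes, gives $A_\Omega\tilde v=\zeta\tilde v$ in the limit;
\item discreteness of $\sigma(A_\Omega)$ and orthogonality to $\tilde\varphi_1,\dots,\tilde\varphi_{\ell-1}$ then force $\zeta=\lambda_\ell$.
\end{itemize}

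You never pass to the limit in the equation. Instead you prove a liminf inequality $Q(\tilde\varphi_k)\le\mu_k$ and a recovery inequality $\mu_k\le Q(v)$ for every admissible $v$. The latter uses the projected trial functions $v_n$, where the eigen-equation reduces the cross terms to $\lambda_{\beta_n,j}\langle v,\varphi_{\beta_n,j}\rangle\to 0$. This is a $\Gamma$-convergence-type argument: it shows that $\tilde\varphi_k$ realizes the constrained minimum in (\ref{recursive}), and you then invoke uniqueness of the recursive characterization.

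Your route has three advantages:
\begin{itemize}
\item Multiplicities are handled without the case split $\lambda_{\ell-1}=\lambda_\ell$ versus $\lambda_{\ell-1}<\lambda_\ell$. In the paper, the second case tacitly uses that the $\tilde\varphi_j$ with $\lambda_j=\lambda_{\ell-1}$ span the whole eigenspace.
\item You obtain directly the recursive formula demanded in (4). The paper proves orthonormality and the eigen-relation and leaves the formula implicit.
\item Your subsequence-of-subsequence remark gives (3) for the full sequence, which the paper also leaves implicit.
\end{itemize}

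The cost is that you need a finite-index version of the uniqueness statement after (\ref{recursive}). That requires a short Lagrange-multiplier induction: a constrained minimizer orthogonal to already-identified eigenfunctions is itself an eigenfunction, and then one compares with (\ref{nonrecursive1})--(\ref{nonrecursive2}) to get $\mu_k=\lambda_k$. The paper's route needs only the weak form of the limit equation and the ordering of $\sigma(A_\Omega)$.
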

\begin{proof}
Statement (1) is trivially true since for $n\geq\bar n$ one has 
$\sigma_{\beta_n,0}\geq 1+\beta_n\geq \lambda_{\bar k}+1$. In order to prove statement (2) we
choose an orthonormal set of functions $\varphi_1$, \dots, $\varphi_{\bar k}\in H^1_\Omega(\R^N)$ such that $A_\Omega \varphi_k=\lambda_k \varphi_k$, $k=1$, \dots, $\bar k$, and then we apply (\ref{proper2}) and (\ref{nonrecursive2}):
\begin{multline}
\lambda_{\beta_n,k}=\inf_{\substack{U\subset H^1\\\dim U=k}}\,\max_{\substack{\|v\|_{L^2}=1\\v\in U}}\left(\int_{\R^N}|\nabla v|^2\,dx+\int_{\R^N}V_{\beta_n}(x)|v|^2\,dx\right)\\
\leq \max_{\substack{\|v\|_{L^2}=1\\v\in \span\langle \varphi_1,\dots,\varphi_k\rangle}}\left(\int_{\R^N}|\nabla v|^2\,dx+\int_{\R^N}V_{\beta_n}(x)|v|^2\,dx\right)\\
=\max_{\substack{\|v\|_{L^2}=1\\v\in \span\langle \varphi_1,\dots,\varphi_k\rangle}}\left(\int_{\R^N}|\nabla v|^2\,dx+\int_{\R^N}|v|^2\,dx\right)=\lambda_k<\sigma_{\beta_n,0}, \quad k=1,\dots,\bar k.
\end{multline} 
It follows from Theorem \ref{charact} that, for all $n\geq\bar n$, $\lambda_{\beta_n,1}$, \dots, $\lambda_{\beta_n \bar k}$ are eigenvalues of $A_{\beta_n}$ satisfying (\ref{proper-recursive}).

In order to prove statements (3) and (4) we procede by induction.  Let $\ell\in\{1,\dots,\bar k\}$ and assume that statements (3) and (4) are true for $k=1$, \dots, $\ell-1$. We shall prove that then statements  (3) and (4) are true for $k=1$, \dots, $\ell$. For ease of notation, set $\lambda_{\beta,0}:=0$ and $\lambda_{0}:=0$. 
We have
\begin{multline}
0\leq\lambda_{\beta_n,\ell-1}\leq\lambda_{\beta_n,\ell}=
\int_{\R^N}|\nabla \varphi_{\beta_n,\ell}|^2\,dx+\int_{\R^N}V_{\beta_n}(x)|\varphi_{\beta_n,\ell}|^2\,dx\\
=\|\varphi_{\beta_n,\ell}\|_{H^1(\R^N)}^2+\beta_n\int_{\R^N}V(x)|\varphi_{\beta_n,\ell}|^2\,dx\leq\lambda_\ell.
\end{multline}
It follows that there exists $\zeta\in[\lambda_{\ell-1},\lambda_\ell]$ such that, up to a subsequence, $\lambda_{\beta_n,\ell}\to\zeta$ as $n\to\infty$, and there exists $\tilde v\in H^1(\R^N)$ such that, up to a subsequence, $\varphi_{\beta_n,\ell}\weakto\tilde v$ in $H^1(\R^N)$  and $\varphi_{\beta_n,\ell}\to\tilde v$ in $L^2_{\loc}(\R^N)$ as $n\to\infty$. 

Let $\cl{B_r(x_0)}$ be a closed ball contained in $\R^N\setminus\cl\Omega$. Then we have
\begin{equation}\left(\essinf_{\cl{B_r(x_0)}} V(x)\right)\int_{\cl{B_r(x_0)}}|\varphi_{\beta_n,\ell}|^2\,dx\leq\frac{\lambda_\ell}{\beta_n}\to 0\quad\text{as $n\to\infty$.}\end{equation}
It follows that $\tilde v(x)=0$ a.e. in $\R^N\setminus\Omega$, i.e. $\tilde v\in H^1_\Omega(\Bbb R^N)$. 

Let $R>0$ be such that $\essinf_{\R^N\setminus\cl{B_R(0)}} V(x)\geq\frac12$. Then we have
\begin{equation}\frac12\int_{\R^N\setminus\cl{B_r(x_0)}}|\varphi_{\beta_n,\ell}|^2\,dx\leq\frac{\lambda_\ell}{\beta_n}\to 0\quad\text{as $n\to\infty$.}\end{equation}
This, together with the fact that $\varphi_{\beta_n,\ell}\to\tilde v$ in $L^2_{\loc}(\R^N)$, implies that $\varphi_{\beta_n,\ell}\to\tilde v$ in $L^2(\R^N)$ as $n\to\infty$. It follows that $\|\tilde v\|_{L^2_\Omega(\R^N)}=1$. Now, by the inductive hypothesis, for $k=1$, \dots, $\ell-1$ we have
\begin{equation}
0=\int_{\R^N}\varphi_{\beta_n,k}\varphi_{\beta_n,\ell}\,dx\to\int_{\R^N}\tilde \varphi_{k}\tilde v\,dx\quad\text{as $n\to\infty$,}
\end{equation}
so $\tilde v\perp\span\langle\tilde \varphi_1,\dots,\tilde \varphi_{\ell-1}\rangle$.
Now let $\phi\in H^1_\Omega(\R^N)$. For all $n\geq \bar n$ we have
\begin{multline}
\int_{\R^N}\nabla \varphi_{\beta_n,\ell}\cdot\nabla\phi\,dx+\int_{\R^N}\varphi_{\beta_n,\ell}\phi\,dx\\=\int_{\R^N}\nabla \varphi_{\beta_n,\ell}\cdot\nabla\phi\,dx+\int_{\R^N}V_{\beta_n}(x)\varphi_{\beta_n,\ell}\phi\,dx=\lambda_{\beta_n,\ell}\int_{\R^N}\varphi_{\beta_n,\ell}\phi\,dx.
\end{multline}
By letting $n\to\infty$ we get
\begin{equation}
\int_{\R^N}\nabla \tilde v\cdot\nabla\phi\,dx+\int_{\R^N}\tilde v\phi\,dx=\zeta\int_{\R^N}\tilde v\phi\,dx.
\end{equation}
Therefore $\zeta\in[\lambda_{\ell-1},\lambda_\ell]$ is an eigenvalue of $A_\Omega$, and $A_\Omega \tilde v=\zeta\tilde v$. If $\lambda_{\ell-1}=\lambda_\ell$, then $\zeta=\lambda_\ell$. If $\lambda_{\ell-1}<\lambda_\ell$, then either $\zeta=\lambda_{\ell-1}$ or $\zeta=\lambda_{\ell}$, but since $\tilde v\perp\span\langle\tilde \varphi_1,\dots,\tilde \varphi_{\ell-1}\rangle$ then necessarily $\zeta=\lambda_\ell$. We set $\tilde \varphi_\ell:=\tilde v$ and, in order to complete the proof, we only need to prove that $\|\varphi_{\beta_n,\ell}-\tilde \varphi_\ell\|_{1,\beta_n}\to 0$ as $n\to\infty$.
To this end, we observe that
\begin{multline*}
\|\varphi_{\beta_n,\ell}-\tilde\varphi_\ell\|_{1,\beta_n}^2\\
=\int_{\R^N}|\nabla(\varphi_{\beta_n,\ell}-\tilde\varphi_\ell)|^2\,dx+
\int_{\R^N}|\varphi_{\beta_n,\ell}-\tilde\varphi_\ell|^2\,dx+\beta_n\int_{\R^N}V(x)|\varphi_{\beta_n,\ell}-\tilde\varphi_\ell|^2\,dx\\
=\int_{\R^N}|\nabla \varphi_{\beta_n,\ell}|^2\,dx+
\int_{\R^N}|\varphi_{\beta_n,\ell}|^2\,dx+\beta_n\int_{\R^N}V(x)|\varphi_{\beta_n,\ell}|^2\,dx\\
-2\int_{R^N}\nabla \varphi_{\beta_n,\ell}\cdot\nabla \tilde\varphi_\ell\,dx-2\int_{\R^N}\varphi_{\beta_n,\ell}\tilde\varphi_\ell\,dx
+\int_{\R^N}|\nabla \tilde\varphi_\ell|^2\,dx+
\int_{\R^N}|\tilde\varphi_\ell|^2\,dx\\
=\lambda_{\beta_n,\ell}-2\int_{R^N}\nabla \varphi_{\beta_n,\ell}\cdot\nabla \tilde\varphi_\ell\,dx-2\int_{\R^N}\varphi_{\beta_n,\ell}\tilde\varphi_\ell\,dx+\lambda_\ell.
\end{multline*}
Since $\lambda_{\beta_n,\ell}\to\lambda_\ell$ and $\varphi_{\beta_n,\ell}\weakto\tilde\varphi_\ell$ in $H^1(\R^N)$ as $n\to\infty$, the proof is complete.\end{proof}

In the proof of Theorem \ref{convergenzaspettrale} the sequence  $(\varphi_{\beta_n,k})_n$ converges only up to a subsequence and the limit $\tilde\varphi_k$  depends on the particular subsequence of $(\beta_n)_{n\in\N}$. However, when $\lambda_{\cl k}<\lambda_{\cl k+1}$ the space generated by $\tilde\varphi_1$, \dots, $\tilde\varphi_{\cl k}$ is actually independent of the particular subsequence of $(\beta_n)_{n\in\N}$. Let us denote by $P_{\beta_n,\cl k}$ the spectral projection relative to the spectral set  $S_{\beta_n,\cl k}:=\{\lambda_{\beta_n,1},\dots,\lambda_{\beta_n,\cl k}\}$ of $A_{\beta_n}$, that is
\begin{equation}
P_{\beta_n,\cl k}u=\circantiorario_{\Gamma_{\beta_n,\cl k}} (\lambda I-A_{\beta_n})^{-1}u\,d\lambda=\sum_{k=1}^{\cl k}\langle u,\varphi_{\beta_n,k}\rangle_{L^2(\R^N)}\varphi_{\beta_n,k},\quad u\in H^1(\R^N),
\end{equation}
where $\Gamma_{\beta_n,\cl k}$ is a cycle in $\C\setminus \sigma(A_{\beta_n})$ with ${\rm Ind}_{\Gamma_{\beta_n,\cl k}}(S_{\beta_n,\cl k})=1$ and  ${\rm Ind}_{\Gamma_{\beta_n,\cl k}}(\sigma(A_{\beta_n})\setminus S_{\beta_n,\cl k})=0$. Moreover,
let us denote by $P_{\Omega,\cl k}$ the spectral projection relative to the spectral set  $S_{\Omega,\cl k}:=\{\lambda_{\Omega,1},\dots,\lambda_{\Omega,\cl k}\}$ of $A_{\Omega}$, that is
\begin{equation}
P_{\Omega,\cl k}u=\circantiorario_{\Gamma_{\Omega,\cl k}} (\lambda I-A_{\Omega})^{-1}u\,d\lambda=\sum_{k=1}^{\cl k}\langle u,\varphi_{\Omega,k}\rangle_{L^2_\Omega(\R^N)}\varphi_{\Omega,k},\quad u\in H^1_{\Omega},
\end{equation}
where $\Gamma_{\Omega, \cl k}$ is a cycle in $\C\setminus \sigma(A_{\Omega})$ with ${\rm Ind}_{\Gamma_{\Omega,\cl k}}(S_{\Omega,\cl k})=1$ and  ${\rm Ind}_{\Gamma_{\Omega,\cl k}}(\sigma(A_{\Omega})\setminus S_{\Omega,\cl k})=0$. Then one has
\begin{equation}
P_{\beta_n,\cl k}u_n\to P_{\Omega,\cl k} u\quad\text{as $n\to\infty$}
\end{equation}
whenever $(u_n)_n$ is a sequence in $L^2(\R^N)$ converging to some $u\in L^2_\Omega(\R^N)$.


\section{The linear hyperbolic equation and its semigroup}

For $\beta\geq0$ we denote by $X^1_\beta$ the Hilbert space $H^1(\R^N)\times L^2(\R^N)$ equipped with the scalar product
\begin{equation}
\langle(u,v),(h,k)\rangle_{X^1_\beta}:=\langle u,h\rangle_{1,\beta}+\langle v, k\rangle_{L^2(\R^N)}
\end{equation}
with the corresponding norm 
\begin{equation}\|(u,v)\|_{X^1_\beta}:=\left(\|u\|_{1,\beta}^2+\|v\|_{L^2(\R^N)}^2\right)^{1/2}.\end{equation} 
Moreover, we denote by $X^1_\Omega$ the Hilbert space $H^1_\Omega(\R^N)\times L^2_\Omega(\R^N)$ equipped with the scalar product
\begin{equation}
\langle(u,v),(h,k)\rangle_{X^1_\Omega}:=\langle u,h\rangle_{H^1_\Omega(\R^N)}+\langle v, k\rangle_{L^2_\Omega(\R^N)}
\end{equation}
with the corresponding norm 
\begin{equation}\|(u,v)\|_{X^1_\Omega}:=\left(\|u\|_{H^1_\Omega(\R^N)}^2+\|v\|_{L^2_\Omega(\R^N)}^2\right)^{1/2},\end{equation} 
and we notice that $X^1_\Omega$ is a closed subspace of $X^1_\beta$, and that the scalar product of $X^1_\beta$ computed on elements of $X^1_\Omega$ coincides with the scalar product of $X^1_\Omega$. 

Let $\gamma\geq0$. In the space $X^1_\beta$ we define the closed linear operator
\begin{equation}
B_{\beta}(u,v):=\left(v,-A_\beta u-\gamma v\right), \quad (u,v)\in D(B_\beta)= D(A_\beta)\times H^1(\R^N).
\end{equation}
Similarly, in the space $X^1_\Omega$ we define the closed linear operator
\begin{equation}
B_{\Omega}(u,v):=\left(v,-A_\Omega u-\gamma v\right), \quad (u,v)\in D(B_\Omega)=D(A_\Omega)\times H^1_\Omega(\R^N).
\end{equation}
\begin{Rem}We denote by the same symbols both the real operators defined above and their complexifications, acting in $H^1(\R^N,\C)\times L^2(\R^N,\C)$ and $H^1_\Omega(\R^N,\C)\times L^2(\R^N,\C)$ respectively.\end{Rem}

We shall prove the following theorem:
\begin{Theorem}\label{cond-HY} Let $\delta\geq0$ be such that 
\begin{equation}\label{condsudelta}
2\delta\leq\gamma\quad\text{and}\quad \gamma\delta<1.
\end{equation}
Then all $\mu>-\delta$ are in the resolvent set of $B_\beta$ for every $\beta\geq 0$, and in the resolvent set of $B_\Omega$. Moreover, there exist $M\geq 1$, dependent on $\delta$ but independent of $\beta$, such that for all $\mu>-\delta$ the following estimates hold:
\begin{equation}
\|(\mu I-B_\beta)^{-j}\|_{\mathcal L(X^1_\beta)}\leq \frac{M}{(\mu+\delta)^j}, \quad j=1,2,3,\dots
\end{equation}
and
\begin{equation}
\|(\mu I-B_\Omega)^{-j}\|_{\mathcal L(X^1_\Omega)}\leq \frac{M}{(\mu+\delta)^j}, \quad j=1,2,3,\dots
\end{equation}
If $\delta=0$ one can take $M=1$.
\end{Theorem}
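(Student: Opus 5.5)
The plan is to apply the Hille--Yosida (Lumer--Phillips) theorem in an equivalent norm on $X^1_\beta$, chosen so that the shifted operator $B_\beta+\delta I$ is dissipative and the equivalence constants do not depend on $\beta$. The case of $B_\Omega$ is handled in exactly the same way, replacing $A_\beta$, $\|\cdot\|_{1,\beta}$ and $H^1(\R^N)$ by $A_\Omega$, $\|\cdot\|_{H^1_\Omega}$ and $H^1_\Omega(\R^N)$. The only structural facts used are $\langle A_\beta u,u\rangle_{L^2}=\|u\|_{1,\beta}^2$ and $\|u\|_{1,\beta}\ge\|u\|_{L^2}$.

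\emph{Case $\delta=0$.} For $(u,v)\in D(B_\beta)$ we compute
\begin{equation*}
\langle B_\beta(u,v),(u,v)\rangle_{X^1_\beta}=\langle v,u\rangle_{1,\beta}-\langle A_\beta u,v\rangle_{L^2}-\gamma\|v\|^2_{L^2}=-\gamma\|v\|_{L^2}^2\le 0,
\end{equation*}
since $\langle A_\beta u,v\rangle_{L^2}=a_\beta(u,v)$. So $B_\beta$ is dissipative.

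For maximality, take $\mu>0$ and $(f,g)\in X^1_\beta$ and solve $\mu u-v=f$, $\mu v+A_\beta u+\gamma v=g$. This reduces to
\begin{equation*}
(A_\beta+\mu(\mu+\gamma))u=g+(\mu+\gamma)f\in L^2(\R^N),
\end{equation*}
which has a unique solution $u\in D(A_\beta)$ by Lax--Milgram. Then $v=\mu u-f\in H^1(\R^N)$. Lumer--Phillips now gives $\|(\mu I-B_\beta)^{-1}\|\le 1/\mu$, and iterating yields the bound with $M=1$. The same computation works for complex $\mu$ after complexification, but only real $\mu$ is needed.

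\emph{Case $\delta>0$.} Introduce the modified inner product
\begin{equation*}
[(u,v),(h,k)]:=\langle u,h\rangle_{1,\beta}+\langle v+\delta u,\,k+\delta h\rangle_{L^2}+c\,\langle u,h\rangle_{L^2},
\end{equation*}
with $c=\delta(\gamma-\delta)\ge0$; this is nonnegative because $2\delta\le\gamma$. Then compute $\mathrm{Re}\,[(B_\beta+\delta)(u,v),(u,v)]$ with $w=v+\delta u$. Expanding, the mixed terms $\langle u,v\rangle$ cancel thanks to the choice of $c$, leaving
\begin{equation*}
-\delta\|u\|_{1,\beta}^2+\delta^2(\gamma-\delta)\|u\|^2_{L^2}-(\gamma-2\delta)\|w\|^2_{L^2}.
\end{equation*}
Using $\|u\|^2_{L^2}\le\|u\|^2_{1,\beta}$ and $\delta(\gamma-\delta)\le\gamma\delta<1$, the first two terms combine to at most $-\delta\bigl(1-\delta(\gamma-\delta)\bigr)\|u\|_{1,\beta}^2\le0$. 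The last term is also $\le0$ since $2\delta\le\gamma$. Hence $B_\beta+\delta I$ is dissipative in $[\cdot,\cdot]$.

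The main obstacle is making this algebra close exactly, possibly by readjusting $c$ or the weight in front of $\|u\|_{1,\beta}^2$, and then checking norm equivalence uniformly in $\beta$. For equivalence, the bound $\delta\|u\|_{L^2}\le\delta\|u\|_{1,\beta}$ gives
\begin{equation*}
\|(u,v)\|_{X^1_\beta}^2\le C_1[(u,v),(u,v)]\le C_2\|(u,v)\|^2_{X^1_\beta},
\end{equation*}
where the lower bound uses $\|v\|^2\le 2\|w\|^2+2\delta^2\|u\|^2_{1,\beta}$. The constants depend only on $\gamma,\delta$, not on $\beta$, and they are the same in the $\Omega$ case.

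Surjectivity of $\mu-(B_\beta+\delta)$ for $\mu>0$ is the same Lax--Milgram reduction as before, now with $\mu+\delta$ in place of $\mu$. One needs $(\mu-\delta)(\mu-\delta+\gamma)>-1$ for $\mu>-\delta$ in the original variable. Writing $s=\mu+\delta>0$, this is $(s-\delta)(s-\delta+\gamma)\ge-\delta(\gamma-\delta)>-1$, which holds because $\gamma\delta<1$. So Lax--Milgram for $A_\beta+\lambda I$ with $\lambda>-1$ applies, as noted in Section 2.

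Lumer--Phillips in the new norm then gives $[\cdot]$-norm bounds $(\mu+\delta)^{-j}$ for all powers of the resolvent. Transferring back to $\|\cdot\|_{X^1_\beta}$ costs the factor $M=\sqrt{C_1C_2}$, which is independent of $\beta$.
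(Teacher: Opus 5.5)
Your strategy matches the paper's. You use an inner product $\langle u,h\rangle_{1,\beta}+c\langle u,h\rangle_{L^2}+\langle v+\delta u,k+\delta h\rangle_{L^2}$, equivalent to the $X^1_\beta$ one uniformly in $\beta$, in which $B_\beta+\delta I$ should be dissipative. You then reduce the range condition via Lax--Milgram to $A_\beta+\mu(\mu+\gamma)I$. Your $\delta=0$ case and your resolvent-set check are fine.

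\textbf{The gap: the sign of $c$.} The dissipativity claim for $\delta>0$ is false with $c=+\delta(\gamma-\delta)$. With $w=v+\delta u$ one has $(B_\beta+\delta I)(u,v)=\bigl(w,\,-A_\beta u-(\gamma-\delta)w+\delta(\gamma-\delta)u\bigr)$. Since $\mathrm{Re}\bigl(\langle w,u\rangle_{1,\beta}-\langle A_\beta u,w\rangle_{L^2}\bigr)=0$,
\[
\mathrm{Re}\,[(B_\beta+\delta I)U,U]=-(\gamma-2\delta)\|w\|_{L^2}^2+\bigl(c+\delta(\gamma-\delta)\bigr)\,\mathrm{Re}\langle u,w\rangle_{L^2}.
\]
There is no $-\delta\|u\|_{1,\beta}^2$ term. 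The $+\delta\|u\|_{1,\beta}^2$ from pairing the first component cancels exactly against $-\langle A_\beta u,\delta u\rangle_{L^2}$. So the identity contains no negative term in $u$, and no use of $\|u\|_{L^2}\le\|u\|_{1,\beta}$ can absorb a positive $\|u\|_{L^2}^2$ contribution.

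With your $c$ the cross term is $2\delta(\gamma-\delta)\,\mathrm{Re}\langle u,w\rangle_{L^2}$. For $U=(u,0)$ with $u\in D(A_\beta)\setminus\{0\}$ this gives $\mathrm{Re}\,[(B_\beta+\delta I)U,U]=\gamma\delta^2\|u\|_{L^2}^2>0$, since $\gamma\ge 2\delta>0$. So $B_\beta+\delta I$ is not dissipative in your norm, and the Lumer--Phillips step fails.

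\textbf{The fix.} Your suspicion that $c$ needs readjusting is right, but it must change sign. The cross term vanishes only for $c=\delta^2-\gamma\delta=-\delta(\gamma-\delta)\le 0$, which is the paper's choice. It gives exactly $\mathrm{Re}\,[(B_\beta+\delta I)U,U]=-(\gamma-2\delta)\|v+\delta u\|_{L^2}^2\le 0$ (Lemma~\ref{dissip}).

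With a negative coefficient the form is no longer obviously positive, and this is where $\gamma\delta<1$ is genuinely used. From $2\,\mathrm{Re}\langle v,\delta u\rangle_{L^2}\ge-\frac12\|v\|_{L^2}^2-2\delta^2\|u\|_{L^2}^2$ and $\|u\|_{L^2}\le\|u\|_{1,\beta}$ one gets
\[
[U,U]\ge(1-\gamma\delta)\|u\|_{1,\beta}^2+\frac12\|v\|_{L^2}^2 .
\]
The upper bound $[U,U]\le(2+\delta^2)\|U\|_{X^1_\beta}^2$ is immediate (Lemma~\ref{equivalent}), and both constants are independent of $\beta$.

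With these two corrections the rest of your argument goes through as in the paper: the resolvent bound $(\mu+\delta)^{-1}$ in the new norm, iteration to powers $j$, and transfer back to $\|\cdot\|_{X^1_\beta}$ with a $\beta$-independent factor.
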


In order to prove Theorem \ref{cond-HY} we need to introduce new scalar products in $X^1_\beta$ and $X^1_\Omega$. For $\delta$ satisfying (\ref{condsudelta}) we define
\begin{equation}
\langle(u,v),(h,k)\rangle_{X^1_\beta,\delta}:=\langle u,h\rangle_{1,\beta}+(\delta^2-\gamma\delta)\langle u, h\rangle_{L^2(\R^N)}+\langle v+\delta u, k+\delta h\rangle_{L^2(\R^N)}
\end{equation}
and
\begin{equation}
\langle(u,v),(h,k)\rangle_{X^1_\Omega,\delta}:=\langle u,h\rangle_{H^1_\Omega(\R^N)}+(\delta^2-\gamma\delta)\langle u, h\rangle_{L^2_\Omega(\R^N)}+\langle v+\delta u, k+\delta h\rangle_{L^2_\Omega(\R^N)},
\end{equation}
and we denote by $\|\cdot\|_{X^1_\beta,\delta}$ and $\|\cdot\|_{X^1_\Omega,\delta}$ the corresponding norms.

\begin{Lemma}\label{equivalent}
Let $\delta$ satisfy (\ref{condsudelta}). There exists two constants $k_\delta$ and $K_\delta$ such that 
\begin{equation}
k_\delta\|(u,v)\|_{X^1_\beta}\leq\|(u,v)\|_{X^1_\beta,\delta}\leq K_\delta\|(u,v)\|_{X^1_\beta},\quad\text{$(u,v)\in X^1_\beta$}
\end{equation}
for all $\beta\geq0$, and
\begin{equation}
k_\delta\|(u,v)\|_{X^1_\Omega}\leq\|(u,v)\|_{X^1_\Omega,\delta}\leq K_\delta\|(u,v)\|_{X^1_\Omega},\quad\text{$(u,v)\in X^1_\Omega$.}
\end{equation}
If $\delta=0$ one can take $k_\delta=K_\delta=1$.
\end{Lemma}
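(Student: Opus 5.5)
We prove the two-sided estimate by expanding the quadratic form $\|(u,v)\|_{X^1_\beta,\delta}^2$ and comparing it term by term with $\|u\|_{1,\beta}^2+\|v\|_{L^2}^2$. The only property of $\|\cdot\|_{1,\beta}$ we use is $\|u\|_{L^2(\R^N)}\leq\|u\|_{1,\beta}$, which holds because $V_\beta\geq1$. This inequality is uniform in $\beta$, so all constants will depend only on $\gamma$ and $\delta$. The $\Omega$ case is identical, since $\|u\|_{L^2_\Omega}\leq\|u\|_{H^1_\Omega}$. For $\delta=0$ the two norms coincide, so $k_0=K_0=1$ is immediate.

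Expanding, we get
\begin{equation*}
\|(u,v)\|_{X^1_\beta,\delta}^2=\|u\|_{1,\beta}^2+(2\delta^2-\gamma\delta)\|u\|_{L^2}^2+2\delta\langle u,v\rangle_{L^2}+\|v\|_{L^2}^2 .
\end{equation*}

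\emph{Upper bound.} Since $2\delta^2-\gamma\delta\leq 0$ by $2\delta\leq\gamma$, the second term can be dropped. We bound $2\delta\langle u,v\rangle\leq\delta(\|u\|_{L^2}^2+\|v\|_{L^2}^2)\leq\delta(\|u\|_{1,\beta}^2+\|v\|_{L^2}^2)$. This gives $K_\delta^2=1+\delta$.

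\emph{Lower bound.} This is the main step, because the negative coefficient and the cross term must both be absorbed. We write $\|u\|_{1,\beta}^2=\theta\|u\|_{1,\beta}^2+(1-\theta)\|u\|_{1,\beta}^2$ with $\theta\in(0,1)$ to be chosen, and use $\|u\|_{1,\beta}^2\geq\|u\|_{L^2}^2$ on the second piece. We also split $\|v\|^2=\theta'\|v\|^2+(1-\theta')\|v\|^2$. This leaves the quadratic form in $(a,b)=(\|u\|_{L^2},\|v\|_{L^2})$
\begin{equation*}
\big(1-\theta+2\delta^2-\gamma\delta\big)a^2-2\delta ab+(1-\theta')b^2 ,
\end{equation*}
which we want to be nonnegative, i.e. its discriminant should satisfy $\delta^2\leq(1-\theta+2\delta^2-\gamma\delta)(1-\theta')$. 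At $\theta=\theta'=0$ this reads $\delta^2\leq 1+2\delta^2-\gamma\delta$, i.e. $0<1-\gamma\delta+\delta^2$, and this holds strictly because $\gamma\delta<1$. By continuity we can then fix small $\theta,\theta'>0$, depending only on $\gamma$ and $\delta$, for which the inequality still holds. We conclude $\|(u,v)\|_{X^1_\beta,\delta}^2\geq\min(\theta,\theta')\|(u,v)\|_{X^1_\beta}^2$ and take $k_\delta^2=\min(\theta,\theta')$.

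As a cleaner alternative for the lower bound, we can write the form as $\|u\|_{1,\beta}^2-\|u\|^2+\big[(1+\delta^2-\gamma\delta)\|u\|^2+\|v+\delta u\|^2\big]$. The bracket is a positive definite form in $(u,v)$ with $L^2$-constants depending only on $\gamma,\delta$, since $1+\delta^2-\gamma\delta>0$. Combining it with $\|u\|_{1,\beta}^2-\|u\|^2\geq0$, through the same convex-splitting argument, gives the same result.
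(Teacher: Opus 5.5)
Your proof is correct and takes essentially the paper's route: expand $\|v+\delta u\|^2$, control the cross term $2\delta\Re\langle u,v\rangle_{L^2}$ by Cauchy--Schwarz/Young, and absorb the remaining terms using the $\beta$-uniform inequality $\|u\|_{L^2}\le\|u\|_{1,\beta}$ (write the real part explicitly, since the lemma is used on the complexified spaces). The only difference is bookkeeping: the paper fixes explicit Young weights, $2\Re\langle v,\delta u\rangle\ge-\frac12\|v\|^2-2\delta^2\|u\|^2$, which leaves $\|u\|_{1,\beta}^2-\gamma\delta\|u\|^2+\frac12\|v\|^2$ and hence $k_\delta^2=\min\{1-\gamma\delta,1/2\}$; your continuity/discriminant choice gives non-explicit lower constants, but it yields a sharper $K_\delta^2=1+\delta$ and a lower bound that needs only $1-\gamma\delta+\delta^2>0$.
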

\begin{proof}
First we observe that since $2\delta\leq\gamma$, then $\delta^2-\gamma\delta\leq-\delta^2$. It follows that
\begin{multline}
\|(u,v)\|_{X^1_\beta,\delta}^2=\|u\|_{1,\beta}^2+(\delta^2-\gamma\delta)\|u\|_{L^2(\R^N)}^2+\|v+\delta u\|_{L^2(\R^N)}^2\\
=\|u\|_{1,\beta}^2+(\delta^2-\gamma\delta)\|u\|_{L^2(\R^N)}^2+\|v\|_{L^2(\R^N)}^2+\delta^2\| u\|_{L^2(\R^N)}^2+2\Re\langle v,\delta u \rangle_{L^2(\R^N)}\\
\leq \|u\|_{1,\beta}^2+(\delta^2-\gamma\delta)\|u\|_{L^2(\R^N)}^2+\|v\|_{L^2(\R^N)}^2+\delta^2\| u\|_{L^2(\R^N)}^2+\|v\|_{L^2(\R^N)}^2+\delta^2\|u\|_{L^2(\R^N)}^2\\
\leq  \|u\|_{1,\beta}^2+\delta^2\|u\|_{L^2(\R^N)}^2+2\|v\|_{L^2(\R^N)}^2\leq(1+\delta^2)\|u\|_{1,\beta}^2+2\|v\|_{L^2(\R^N)}^2\\
\leq(2+\delta^2)\|(u,v)\|_{X^1_\beta}^2
\end{multline}
On the other hand, we have
\begin{multline}
\|(u,v)\|_{X^1_\beta,\delta}^2=\|u\|_{1,\beta}^2+(\delta^2-\gamma\delta)\|u\|_{L^2(\R^N)}^2+\|v+\delta u\|_{L^2(\R^N)}^2\\
=\|u\|_{1,\beta}^2+(\delta^2-\gamma\delta)\|u\|_{L^2(\R^N)}^2+\|v\|_{L^2(\R^N)}^2+\delta^2\| u\|_{L^2(\R^N)}^2+2\Re\langle v,\delta u \rangle_{L^2(\R^N)}\\
\geq \|u\|_{1,\beta}^2+(\delta^2-\gamma\delta)\|u\|_{L^2(\R^N)}^2+\|v\|_{L^2(\R^N)}^2+\delta^2\| u\|_{L^2(\R^N)}^2-\frac12\|v\|_{L^2(\R^N)}^2-2\delta^2\|u\|_{L^2(\R^N)}^2\\
= \|u\|_{1,\beta}^2-\gamma\delta\|u\|_{L^2(\R^N)}^2+\frac12\|v\|_{L^2(\R^N)}^2\geq (1-\gamma\delta) \|u\|_{1,\beta}^2+\frac12\|v\|_{L^2(\R^N)}^2\\
\geq\min\{1-\gamma\delta,1/2\}\|(u,v)\|_{X^1_\beta}^2
\end{multline}
The proof for $\|(u,v)\|_{X^1_\Omega,\delta}$ is completely analogous.
\end{proof}

\begin{Lemma}\label{dissip} Let $\delta$ satisfy (\ref{condsudelta}). Then 
\begin{equation}
\Re \langle (B_\beta+\delta I)(u,v),(u,v)\rangle_{X^1_\beta,\delta}= -(\gamma-2\delta)\|v+\delta u\|_{L^2(\R^N)}^2,\quad\text{$(u,v)\in D(B_\beta)$}
\end{equation}
for all $\beta\geq 0$ and
\begin{equation} \Re\langle (B_\Omega+\delta I)(u,v),(u,v)\rangle_{X^1_\Omega,\delta}= -(\gamma-2\delta)\|v+\delta u\|_{L^2_\Omega(\R^N)}^2,\quad\text{$(u,v)\in D(B_\Omega)$.}\end{equation}
\end{Lemma}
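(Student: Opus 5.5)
We compute directly from the definitions of $B_\beta$ and of the scalar product $\langle\cdot,\cdot\rangle_{X^1_\beta,\delta}$; the case of $B_\Omega$ is identical, with $\langle\cdot,\cdot\rangle_{1,\beta}$ replaced by $\langle\cdot,\cdot\rangle_{H^1_\Omega(\R^N)}$ and $A_\beta$ by $A_\Omega$. Fix $(u,v)\in D(B_\beta)=D(A_\beta)\times H^1(\R^N)$ (complex valued). Then
\begin{equation*}
(B_\beta+\delta I)(u,v)=(v+\delta u,\,-A_\beta u-\gamma v+\delta v).
\end{equation*}
Set $w:=v+\delta u\in H^1(\R^N)$. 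The first component is then $w$, and the second component can be rewritten as
\begin{equation*}
-A_\beta u-\gamma v+\delta v+\delta w=-A_\beta u-(\gamma-2\delta)w+(\delta^2-\gamma\delta)u,
\end{equation*}
using $v=w-\delta u$. This reorganization in terms of $w$ is the key bookkeeping step.

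Plugging into the definition of the scalar product, with $h=u$ and $k+\delta h=w$, we get
\begin{multline*}
\langle (B_\beta+\delta I)(u,v),(u,v)\rangle_{X^1_\beta,\delta}
=\langle w,u\rangle_{1,\beta}+(\delta^2-\gamma\delta)\langle w,u\rangle_{L^2}\\
+\langle -A_\beta u-(\gamma-2\delta)w+(\delta^2-\gamma\delta)u,\,w\rangle_{L^2}.
\end{multline*}
Since $u\in D(A_\beta)$ and $w\in H^1(\R^N)$, the defining identity of $A_\beta$ gives $\langle A_\beta u,w\rangle_{L^2}=a_\beta(u,w)=\langle u,w\rangle_{1,\beta}$. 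Hence the right-hand side becomes
\begin{equation*}
\langle w,u\rangle_{1,\beta}-\langle u,w\rangle_{1,\beta}+(\delta^2-\gamma\delta)\bigl(\langle w,u\rangle_{L^2}+\langle u,w\rangle_{L^2}\bigr)-(\gamma-2\delta)\|w\|_{L^2}^2.
\end{equation*}

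Now take real parts. In the complex Hilbert space, $\langle w,u\rangle-\langle u,w\rangle=2i\,\Im\langle w,u\rangle$ is purely imaginary, so its real part vanishes. The middle term is $(\delta^2-\gamma\delta)\,2\Re\langle u,w\rangle_{L^2}$, and it does not vanish on its own. I must therefore recheck the bookkeeping: the second component should be paired against $k+\delta h=w$ only once, and the $(\delta^2-\gamma\delta)\langle u,h\rangle$ term pairs the first component $w$ against $u$. The two $(\delta^2-\gamma\delta)$ contributions are $(\delta^2-\gamma\delta)\langle w,u\rangle$ and $(\delta^2-\gamma\delta)\langle u,w\rangle$, which carry the same sign, not opposite signs.

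This sign issue is the step I expect to be the main obstacle. Two remedies are worth trying. First, recheck the algebra of the second component. Directly, $-A_\beta u-\gamma v+\delta v+\delta^2 u$ (since the first component is $v+\delta u$, so $\delta$ times it gives $\delta v+\delta^2 u$) equals $-A_\beta u-(\gamma-2\delta)w-(\delta^2-\gamma\delta)u$. With this corrected sign the two $(\delta^2-\gamma\delta)$ terms give $2i\,\Im$, and their real part cancels. With the signs tracked correctly, taking real parts then leaves exactly $-(\gamma-2\delta)\|v+\delta u\|_{L^2(\R^N)}^2$, as claimed in Lemma \ref{dissip}.

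Second, for $B_\Omega$ the same computation applies verbatim. One only notes that $w\in H^1_\Omega(\R^N)$ and uses $\langle A_\Omega u,w\rangle_{L^2_\Omega}=\langle u,w\rangle_{H^1_\Omega}$.
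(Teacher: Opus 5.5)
Once the sign slip is fixed, your argument is correct and is essentially the paper's proof. Both are a direct expansion of the $\delta$-scalar product using $\langle A_\beta u,w\rangle_{L^2}=\langle u,w\rangle_{1,\beta}$ for $w\in H^1(\R^N)$; your substitution $w=v+\delta u$ just makes the cross terms cancel visibly in conjugate pairs, while the paper expands in $u,v$ and only at the end regroups the leftover terms into $-(\gamma-\delta)\|v+\delta u\|^2_{L^2}$. In a clean write-up, start from the correct identity $-A_\beta u-\gamma v+2\delta v+\delta^2u=-A_\beta u-(\gamma-2\delta)w-(\delta^2-\gamma\delta)u$: your first version had the wrong sign on the $u$-term, and your corrected line drops one of the two $\delta v$ terms.
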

\begin{proof} We have
\begin{multline}
\langle (B_\beta+\delta I)(u,v),(u,v)\rangle_{X^1_\beta,\delta}=\langle (v,-A_\beta u-\gamma v),(u,v)\rangle_{X^1_\beta,\delta}+\delta\langle (u,v),(u,v)\rangle_{X^1_\beta,\delta}\\
=\langle v,u\rangle_{1,\beta}+(\delta^2-\gamma\delta)\langle v,u\rangle_{L^2(\R^N)}+\langle -A_\beta u-\gamma v+\delta v,v+\delta u\rangle_{L^2(\R^N)}\\
+\delta\langle u,u\rangle_{1,\beta}+\delta(\delta^2-\gamma\delta)\langle u, u\rangle_{L^2(\R^N)}+\delta\langle v+\delta u, v+\delta u\rangle_{L^2(\R^N)}\\
=\langle v,u\rangle_{1,\beta}+(\delta^2-\gamma\delta)\langle v,u\rangle_{L^2(\R^N)}-\langle u,v\rangle_{1,\beta}-\delta\langle u,u\rangle_{1,\beta}-(\gamma-\delta)\langle v,v\rangle_{L^2(\R^N)}\\
-\delta(\gamma-\delta)\langle v,u\rangle_{L^2(\R^N)}+\delta\langle u,u\rangle_{1,\beta}+\delta(\delta^2-\gamma\delta)\langle u, u\rangle_{L^2(\R^N)}+\delta\langle v+\delta u, v+\delta u\rangle_{L^2(\R^N)}\\
=2{\rm i}\Im\langle v,u\rangle_{1,\beta}+2(\delta^2-\gamma\delta)\langle v,u\rangle_{L^2(\R^N)} -(\gamma-\delta)\langle v,v\rangle_{L^2(\R^N)}\\
+\delta(\delta^2-\gamma\delta)\langle u, u\rangle_{L^2(\R^N)}+\delta\langle v+\delta u, v+\delta u\rangle_{L^2(\R^N)}\\
=2{\rm i}\left(\Im\langle v,u\rangle_{1,\beta}+\Im(\delta^2-\gamma\delta)\langle v,u\rangle_{L^2(\R^N)}\right)\\
-(\gamma-\delta)\langle v+\delta u, v+\delta u\rangle_{L^2(\R^N)}+\delta\langle v+\delta u, v+\delta u\rangle_{L^2(\R^N)}\\
=2{\rm i}\left(\Im\langle v,u\rangle_{1,\beta}+\Im(\delta^2-\gamma\delta)\langle v,u\rangle_{L^2(\R^N)}\right)-(\gamma-2\delta)\langle v+\delta u, v+\delta u\rangle_{L^2(\R^N)}.
\end{multline}
The proof for $\langle (B_\Omega+\delta I)(u,v),(u,v)\rangle_{X^1_\Omega,\delta}$ is completely analogous.
\end{proof}

\begin{proof}[Proof of Theorem \ref{cond-HY}]
For $\mu\in\C$ and $(h,k)\in X^1_\beta$ let us consider the problem
$(B_\beta-\mu I)(u,v)=(h,k)$, that is
\begin{equation}
\begin{cases}v-\mu u=h\\-A_\beta u-\gamma v-\mu v=k\end{cases}
\end{equation}
By the first equation we get $v=\mu u+h$. Substituting this expression in the second equation we obtain the following equation for $u$:
\begin{equation}
-A_\beta u-\mu(\gamma+\mu)u=k+(\gamma+\mu)h.
\end{equation}
It follows that $\mu$ is in the resolvent set of $B_\beta$ if and only if $-\mu(\gamma+\mu)$ is in the resolvent set of $A_\beta$. If this is the case, we can easily compute the resolvent operator:
\begin{equation}\label{resolvent_beta}(B_\epsilon-\mu I)^{-1}\left(\begin{array}{l}h\\k\end{array}\right)=\left(\begin{array}{l}(-A_\beta-\mu(\gamma+\mu)I)^{-1}(k+(\gamma+\mu)h)\\h+
\mu(-A_\beta-\mu(\gamma+\mu)I)^{-1}(k+(\gamma+\mu)h)
\end{array}\right).\end{equation}
Analogously,  $\mu$ is in the resolvent set of $B_\Omega$ if and only if $-\mu(\gamma+\mu)$ is in the resolvent set of $A_\Omega$ and if this is the case the resolvent operator is
\begin{equation}\label{resolvent_Omega}(B_\Omega-\mu I)^{-1}\left(\begin{array}{l}h\\k\end{array}\right)=\left(\begin{array}{l}(-A_\Omega-\mu(\gamma+\mu)I)^{-1}(k+(\gamma+\mu)h)\\h+
\mu(-A_\Omega-\mu(\gamma+\mu)I)^{-1}(k+(\gamma+\mu)h)
\end{array}\right).\end{equation}
If $\mu>-\delta$ then $-\mu(\gamma+\mu)<\delta\gamma<1$, and therefore $-\mu(\gamma+\mu)$ belongs to the resolvent set of $A_\beta$ and $A_\Omega$. It follows that $\mu$ belongs to the resolvent set of $B_\beta$ and $B_\Omega$.

Now let $\zeta>0$ and $U=(u,v)\in D(B_\beta)$. By Lemma \ref{dissip} we have that
\begin{multline}\|(I-\zeta (B_\beta+\delta I))U\|_{X^1_\beta,\delta}^2=\langle U-\zeta (B_\beta +\delta I)U,U-\zeta (B_\beta+\delta I) U\rangle_{X^1_\beta,\delta}\\
=\|U\|_{X^1_\beta,\delta}^2-2\zeta\Re\langle (B_\beta +\delta I)U,U\rangle_{X^1_\beta,\delta}+\zeta^2\| (B_\beta+\delta I)U\|_{X^1_\beta,\delta}^2\geq \|U\|_{X^1_\beta,\delta}^2\end{multline}
It follows that for all $\zeta>0$
\begin{equation}\|((\zeta^{-1}-\delta)I-B_\beta)U\|_{X^1_\beta,\delta}\geq\zeta^{-1} \|U\|_{X^1_\beta,\delta}.\end{equation}
Setting $\mu:=\zeta^{-1}-\delta$ and taking $U=(\mu I-B_\beta)^{-1} \tilde U$, $\tilde U\in X^1_\beta$, we get that  
\begin{equation}
\|((\mu I-B_\beta)^{-1}\tilde U\|_{X^1_\beta,\delta}\leq \frac1{\mu+\delta} \|\tilde U\|_{X^1_\beta,\delta}
\end{equation}
for all $\mu>-\delta$. Iterating this inequality, we get
\begin{equation}
\|(\mu I-B_\beta)^{-j}\tilde U\|_{X^1_\beta,\delta}\leq \frac{1}{(\mu+\delta)^j}\|\tilde U\|_{X^1_\beta,\delta}, \quad j=1,2,3,\dots
\end{equation}
The conclusion now follows from Lemma \ref{equivalent}. By the same arguments we obtain the thesis for the operator $B_\Omega$.
\end{proof}

\begin{Theorem}\label{semigroup}
Let $\delta$ and $M$ be as in Theorem \ref{cond-HY}. For all $\beta\geq 0$ the operator $B_\beta$ is the infinitesimal generator of a strongly continuous semigroup $T_\beta(t)$, $t\geq 0$, satisfying the estimate
\begin{equation}
\|T_\beta(t)U\|_{X^1_\beta}\leq Me^{-\delta t}\|U\|_{X^1_\beta}\quad t\geq 0,\, U\in X^1_\beta.
\end{equation}
Moreover, the operator $B_\Omega$ is the infinitesimal generator of a strongly continuous semigroup $T_\Omega(t)$, $t\geq 0$, satisfying the estimate
\begin{equation}
\|T_\Omega(t)U\|_{X^1_\Omega}\leq Me^{-\delta t}\|U\|_{X^1_\Omega}\quad t\geq 0,\, U\in X^1_\Omega.
\end{equation}
If $\delta=0$ we can take $M=1$.
\end{Theorem}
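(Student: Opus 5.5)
The plan is to apply the Hille--Yosida theorem in the form valid for general $C_0$-semigroups (Feller--Miyadera--Phillips), using the resolvent estimates already established in Theorem \ref{cond-HY}. We first check the standing hypotheses. $B_\beta$ is closed, as noted when it was defined. It is densely defined in $X^1_\beta$ because $D(B_\beta)=H^2(\R^N)\times H^1(\R^N)$, which is dense in $H^1(\R^N)\times L^2(\R^N)$. The same holds for $B_\Omega$, since $D(A_\Omega)$ is dense in $H^1_\Omega(\R^N)$ and $H^1_\Omega(\R^N)$ is dense in $L^2_\Omega(\R^N)$.

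Next we shift the operator. Set $\tilde B_\beta:=B_\beta+\delta I$. Then $\mu I-\tilde B_\beta=(\mu-\delta)I-B_\beta$, so by Theorem \ref{cond-HY} every $\mu>0$ lies in the resolvent set of $\tilde B_\beta$, and
\begin{equation*}
\|(\mu I-\tilde B_\beta)^{-j}\|_{\mathcal L(X^1_\beta)}\leq \frac{M}{\mu^j},\qquad j\in\N .
\end{equation*}
The Hille--Yosida theorem (see e.g.\ Pazy, Thm.\ 1.5.3) then says that $\tilde B_\beta$ generates a $C_0$-semigroup $S_\beta(t)$ with $\|S_\beta(t)\|\leq M$. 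We define $T_\beta(t):=e^{-\delta t}S_\beta(t)$. This is a $C_0$-semigroup, its generator is $\tilde B_\beta-\delta I=B_\beta$, and it satisfies $\|T_\beta(t)\|_{\mathcal L(X^1_\beta)}\leq Me^{-\delta t}$. Since $M$ does not depend on $\beta$, the bound is uniform in $\beta$. The identical argument applies to $B_\Omega$ on $X^1_\Omega$.

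For $\delta=0$ we take $M=1$, and the statement reduces to the Lumer--Phillips/Hille--Yosida contraction case. An alternative, more direct route works for every admissible $\delta$. In the equivalent norm $\|\cdot\|_{X^1_\beta,\delta}$, Lemma \ref{dissip} shows that $B_\beta+\delta I$ is dissipative, because $\gamma-2\delta\geq0$. Theorem \ref{cond-HY} gives that $\mu I-B_\beta$ is surjective for $\mu>-\delta$. Lumer--Phillips then shows that $e^{\delta t}T_\beta(t)$ is a contraction semigroup in the $\delta$-norm. Transferring back to the original norm with Lemma \ref{equivalent} yields $M=K_\delta/k_\delta$, which equals $1$ when $\delta=0$.

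The only point that needs care is density of the domains, in particular for $B_\Omega$. This is immediate from the density of $D(A_\Omega)$ in $H^1_\Omega$, which holds because $A_\Omega$ is the part of the isometry $\Lambda_\Omega$ and $L^2_\Omega$ is dense in $H^{-1}_\Omega$. Beyond that the argument is a routine invocation of the generation theorems.
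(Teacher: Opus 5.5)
Your proposal is correct and follows the paper's proof. The paper simply invokes the Hille--Yosida--Feller--Miyadera--Phillips theorem on the resolvent bounds of Theorem \ref{cond-HY}, and your shift by $\delta I$ and your density check just make that invocation explicit. Your alternative Lumer--Phillips route in the norm $\|\cdot\|_{X^1_\beta,\delta}$ is also valid, and it is essentially how the paper derived those resolvent bounds in the first place.
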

\begin{proof}Theorem \ref{cond-HY} guarantees that $B_\beta$ and $B_\Omega$ satisfy the condition of the Hille-Yosida-Feller-Miyadera-Phillips Theorem (see e.g. \cite[Theorem 2.9.1]{Ves} and \cite{Pazy}). The conclusion follows.\end{proof}

The semigroups $T_\beta(t)$ and $T_\Omega(t)$ provide solutions to 
\begin{equation}
\begin{cases}u_t=v\\
v_t=-A_\beta u-\gamma v\\
u(0)=u_0,\quad v(0)=v_0
\end{cases}
\end{equation}
and 
\begin{equation}
\begin{cases}u_t=v\\
v_t=-A_\Omega u-\gamma v\\
u(0)=u_0,\quad v(0)=v_0
\end{cases}
\end{equation}
in the following sense: if $U_0=(u_0,v_0)\in D(B_\beta)$ (resp. $U_0=(u_0,v_0)\in D(B_\Omega)$), then $U(t):=T_\beta(t)U_0$ (resp. $U(t):=T_\Omega(t)U_0$) is continuous into $D(B_\beta)$ (resp. into $D(B_\Omega)$), differentiable into $X^1_\beta$ (resp. into $X^1_\Omega$), and $\dot U(t)=B_\beta U(t)$ (resp. $\dot U(t)=B_\Omega U(t)$). If $U_0\in X^1_\beta$ (resp. $U_0\in X^1_\Omega$) this is no longer true. However, $U(t)$ is differentiable into $X^0_\beta:=L^2(\R^N)\times H^{-1}(\R^N)$ (resp. into $X^0_\Omega:=L^2_\Omega(\R^N)\times H^{-1}_\Omega(\R^N)$), and $\dot U(t)=\Theta_\beta U(t)$ (resp. $\dot U(t)=\Theta_\Omega U(t)$), where
$$
\Theta_\beta (u,v)=(v, -\Lambda_\beta u-\gamma v), \quad (u,v)\in D(\Theta_\beta)= X^1_\beta
$$
and 
$$
\Theta_\Omega (u,v)=(v, -\Lambda_\Omega u-\gamma v), \quad (u,v)\in D(\Theta_\Omega)= X^1_\Omega.
$$
This means that if $U(t)=(u(t),v(t))$ is defined as above, then the equations
$$u_{tt}+\gamma u_t+V_\beta u -\Delta u=0, \quad (t,x)\in [0,+\infty[\times\R^n$$
and 
$$\begin{aligned}u_{tt}+\gamma u_t+u-\Delta u&=0, && (t,x)\in [0,+\infty[\times\Omega\\u&=0,&&(t,x)\in[0,+\infty[\times\partial\Omega\end{aligned}$$ 
are satisfied in $H^{-1}(\R^N)$ and $H^{-1}(\Omega)$ respectively.

If $h\colon[0,T]\to X^1_\beta$ (resp. $h\colon[0,T]\to X^1_\Omega$) is continuous and $U_0\in X^1_\beta$ (resp. $U_0\in X^1_\Omega$), then
\begin{equation}
U(t):=T_\beta(t)U_0+\int_0^t T_\beta(t-s)h(s)\,ds
\end{equation}
and
\begin{equation}
U(t):=T_\Omega(t)U_0+\int_0^t T_\Omega(t-s)h(s)\,ds
\end{equation}
are called the {\em mild solutions} of the inhomogeneous equations
\begin{equation}
\dot U=B_\beta U+h(t),\quad U(0)=U_0
\end{equation}
and 
\begin{equation}
\dot U=B_\Omega U+h(t),\quad U(0)=U_0
\end{equation}
respectively. The inhomogeneous equations are satisfied in the following sense: $U(t)$ is differentiable into $X^0_\beta$ (resp. into $X^0_\Omega$), and $\dot U(t)=\Theta_\beta U(t)+h(t)$ (resp. $\dot U(t)=\Theta_\Omega U(t)+h(t)$). These are well known facts, see e.g. \cite{CH} and \cite{Pazy}; a short recap with self-contained proofs which fits with the notations of the present  paper is contained in \cite{PR}.


\section{A singular Trotter-Kato theorem} 

In this section we state and prove a singular version of the Trotter-Kato theorem for the semigroups $T_\beta(t)$ as $\beta\to+\infty$. This result differs from the standard Trotter-Kato theorem  since we do not work with a fixed norm in $H^1(\R^N)$ but with a family of norms that diverge as $\beta\to+\infty$. We begin with a convergence result for the resolvents of the operators $B_\beta$.

\begin{Theorem}\label{resolvconverg2} 
Let $(\beta_n)_{n\in\N}$ be a sequence of positive numbers, $\beta_n\to+\infty$ as $n\to\infty$. Let $H=(h,k)\in X^1_\Omega$ and for every $n\in\N$ let $H_n=(h_n,k_n)\in X^1_{\beta_n}$, and assume that $\|H_n-H\|_{X^1_{\beta_n}}\to 0$ as $n\to\infty$. Let $\mu>-\delta$ be fixed. Then
\begin{equation}\label{x1}
\|(B_{\beta_n}-\mu I)^{-1}H_n-(B_{\Omega}-\mu I)^{-1}H\|_{X^1_{\beta_n}}\to 0\quad\text{as $n\to\infty$.}\end{equation}
Moreover,
\begin{equation}\label{x2}
\|\left((B_{\beta_n}-\mu I)^{-1}-(B_{\Omega}-\mu I)^{-1}\right)W\|_{X^1_{\beta_n}}\to 0\quad\text{as $n\to\infty$.}\end{equation}
uniformly with respect to $W$ in any compact subset of $X^1_\Omega$.
\end{Theorem}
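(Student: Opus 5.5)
We will reduce the statement to the elliptic resolvent convergence of Corollary \ref{resolvconverg}, using the explicit resolvent formulas (\ref{resolvent_beta}) and (\ref{resolvent_Omega}). Fix $\mu>-\delta$ and set $\lambda:=\mu(\gamma+\mu)$. Since $-\lambda<\gamma\delta<1$, we have $\lambda>-1$, so the corollary applies with this $\lambda$. The formulas give
\[
(B_{\beta_n}-\mu I)^{-1}H_n=\bigl(u_n,\;h_n+\mu u_n\bigr),\qquad u_n:=(-A_{\beta_n}-\lambda I)^{-1}f_n,\quad f_n:=k_n+(\gamma+\mu)h_n,
\]
and similarly $(B_\Omega-\mu I)^{-1}H=(u_\Omega,h+\mu u_\Omega)$, with $u_\Omega:=(-A_\Omega-\lambda I)^{-1}f$ and $f:=k+(\gamma+\mu)h$.

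The hypothesis $\|H_n-H\|_{X^1_{\beta_n}}\to0$ gives $\|h_n-h\|_{1,\beta_n}\to0$, hence $h_n\to h$ in $L^2(\R^N)$, since $\|\cdot\|_{L^2}\le\|\cdot\|_{1,\beta}$. It also gives $k_n\to k$ in $L^2(\R^N)$. Therefore $f_n\to f$ in $L^2(\R^N)$, and $f\in L^2_\Omega(\R^N)$ because $h\in H^1_\Omega$ and $k\in L^2_\Omega$.

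Corollary \ref{resolvconverg} then yields $\|u_n-u_\Omega\|_{1,\beta_n}\to0$. That corollary is stated only up to a subsequence, but the limit $u_\Omega$ is uniquely determined. Hence every subsequence has a further subsequence converging to the same limit, and the whole sequence converges.

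The second component is handled by
\[
\|(h_n+\mu u_n)-(h+\mu u_\Omega)\|_{L^2}\le\|h_n-h\|_{L^2}+|\mu|\,\|u_n-u_\Omega\|_{1,\beta_n}\to0 .
\]
This proves (\ref{x1}).

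For (\ref{x2}) we take $H_n=H=W$ and argue by contradiction. Suppose the convergence is not uniform on a compact set $K\subset X^1_\Omega$. Then there exist $\varepsilon>0$, a subsequence, and points $W_n\in K$ with
\[
\|((B_{\beta_n}-\mu I)^{-1}-(B_\Omega-\mu I)^{-1})W_n\|_{X^1_{\beta_n}}\ge\varepsilon .
\]
By compactness we may assume $W_n\to W$ in $X^1_\Omega$. Since the $X^1_{\beta_n}$ norm coincides with the $X^1_\Omega$ norm on $X^1_\Omega$, this means $\|W_n-W\|_{X^1_{\beta_n}}\to0$.

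Applying (\ref{x1}) with $H_n=W_n$ and $H=W$ gives $(B_{\beta_n}-\mu I)^{-1}W_n\to(B_\Omega-\mu I)^{-1}W$ in the $X^1_{\beta_n}$ norms. On the other hand,
\[
\|(B_\Omega-\mu I)^{-1}(W_n-W)\|_{X^1_{\beta_n}}=\|(B_\Omega-\mu I)^{-1}(W_n-W)\|_{X^1_\Omega}\le\frac{M}{\mu+\delta}\,\|W_n-W\|_{X^1_\Omega}\to0
\]
by Theorem \ref{cond-HY}, so $(B_\Omega-\mu I)^{-1}W_n\to(B_\Omega-\mu I)^{-1}W$ as well. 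Together these contradict the lower bound $\varepsilon$, which proves (\ref{x2}).

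The main point requiring care is the compatibility of the varying norms. The $\beta_n$-norm dominates the $L^2$ norm, which makes the convergence $f_n\to f$ available. The $\beta_n$-norm also agrees with the $H^1_\Omega$ norm on limit objects, which lets us compare elements of $X^1_\Omega$ and $X^1_{\beta_n}$ directly. Beyond this bookkeeping, the argument is routine.
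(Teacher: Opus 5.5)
Your proof is correct and follows the paper's route. You reduce (\ref{x1}) to Corollary \ref{resolvconverg} through the explicit resolvent formulas, and you get (\ref{x2}) by the same compactness-and-contradiction argument, using that the $X^1_{\beta_n}$-norm restricts to the $X^1_\Omega$-norm on $X^1_\Omega$. You also spell out the details the paper calls straightforward: that $\mu(\gamma+\mu)>-1$, that $f_n\to f$ in $L^2(\R^N)$ with $f\in L^2_\Omega(\R^N)$, and that the ``up to a subsequence'' in the elliptic result can be removed because the limit is unique.
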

\begin{proof}
The first statement is a straightforward consequence of Corollary \ref{resolvconverg} and of formulas (\ref{resolvent_beta}) and (\ref{resolvent_Omega}). 
Now let $\mathcal K\subset X^1_\Omega$ be compact. We want to prove that
\begin{equation}
\sup_{W\in\mathcal K}\|\left((B_{\beta_n}-\mu I)^{-1}-(B_{\Omega}-\mu I)^{-1}\right)W\|_{X^1_{\beta_n}}\to 0\quad\text{as $n\to\infty$.}
\end{equation}
Assume by contradiction that this is not true. Then there exists $\epsilon>0$ and for all $k\in\N$ there exist $n_k>\max\{k,n_{k-1}\}$ and $W_k\in\mathcal K$ such that
\begin{equation}
\|\left((B_{\beta_{n_k}}-\mu I)^{-1}-(B_{\Omega}-\mu I)^{-1}\right)W_k\|_{X^1_{\beta_{n_k}}}\geq\epsilon.
\end{equation}
Since $\mathcal K$ is compact, there exists $\cl W\in\mathcal K$ such that, up to a subsequence,
$W_k\to \cl W$ in $X^1_\Omega$ as $k\to\infty$. Then by (\ref{x1})
\begin{equation}
\|(B_{\beta_{n_k}}-\mu I)^{-1}W_k-(B_{\Omega}-\mu I)^{-1}\cl W\|_{X^1_{\beta_{n_k}}}\to 0\quad\text{as $k\to\infty$.}
\end{equation}
Therefore we have
\begin{multline}
\|\left((B_{\beta_{n_k}}-\mu I)^{-1}-(B_{\Omega}-\mu I)^{-1}\right)W_k\|_{X^1_{\beta_{n_k}}}\\
\leq \|(B_{\beta_{n_k}}-\mu I)^{-1}W_k-(B_{\Omega}-\mu I)^{-1}\cl W\|_{X^1_{\beta_{n_k}}}+\|(B_\Omega-\mu I)^{-1}\cl W-(B_{\Omega}-\mu I)^{-1}W_k\|_{X^1_{\beta_{n_k}}}\\
= \|(B_{\beta_{n_k}}-\mu I)^{-1}W_k-(B_{\Omega}-\mu I)^{-1}\cl W\|_{X^1_{\beta_{n_k}}}+\|(B_\Omega-\mu I)^{-1}\cl W-(B_{\Omega}-\mu I)^{-1}W_k\|_{X^1_\Omega}\\
\to 0\quad\text{as $k\to\infty$,}
\end{multline} a contradiction. The proof is complete.
\end{proof}

We can now state and prove the singular version of the Trotter-Kato theorem. We follow closely the proof of the standard version, for which we refer e.g. to \cite[Th. 3.17]{Davies} or \cite[Th. 5.3.1]{Ves}.

\begin{Theorem}\label{TrKa}
Let $(\beta_n)_{n\in\N}$ be a sequence of positive numbers, $\beta_n\to+\infty$ as $n\to\infty$. Let $U_n=(u_n,v_n)\in X^1_{\beta_n}$ for $n\in\N$, let $U=(u,v)\in X^1_\Omega$,  and assume that $\|U_n -U\|_{X^1_{\beta_n}}\to 0$ as $n\to\infty$. Then for all $\tau>0$
\begin{equation}\label{x3}
\sup_{t\in[0,\tau]}\|T_{\beta_n}(t)U_n-T_{\Omega}(t)U\|_{X^1_{\beta_n}}\to 0\quad\text{as $n\to\infty$}.
\end{equation}
\end{Theorem}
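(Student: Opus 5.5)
I will follow the classical proof of the Trotter--Kato theorem (via the identity for the difference of resolvents composed with semigroups), adapting it to the varying norms $\|\cdot\|_{X^1_{\beta_n}}$. Two facts make this possible. First, $X^1_\Omega$ is a closed subspace of every $X^1_{\beta_n}$, and on it all these norms coincide with $\|\cdot\|_{X^1_\Omega}$. Second, by Theorem \ref{semigroup} we have the uniform bounds $\|T_{\beta_n}(t)\|_{\mathcal L(X^1_{\beta_n})}\le M$ and $\|T_\Omega(t)\|\le M$ for $t\ge0$.

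\emph{Reduction.} Since
\[
\|T_{\beta_n}(t)U_n-T_{\beta_n}(t)U\|_{X^1_{\beta_n}}\le M\|U_n-U\|_{X^1_{\beta_n}}\to0,
\]
it suffices to treat $U_n=U\in X^1_\Omega$. A density argument then reduces further. For $U\in X^1_\Omega$ and $\varepsilon>0$, pick $W\in X^1_\Omega$ with $\|U-(B_\Omega-\mu I)^{-1}W\|_{X^1_\Omega}<\varepsilon$; this is possible because $D(B_\Omega)$ is dense. The uniform bound $M$ on both semigroups then makes it enough to prove the claim for $U=R_\Omega W$, where $R_\Omega:=(B_\Omega-\mu I)^{-1}$ and $R_n:=(B_{\beta_n}-\mu I)^{-1}$ for a fixed $\mu>-\delta$.

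\emph{Key identity.} For $W\in X^1_\Omega$, viewed inside $X^1_{\beta_n}$, write
\[
T_{\beta_n}(t)R_\Omega W-T_\Omega(t)R_\Omega W
= T_{\beta_n}(t)(R_\Omega-R_n)W+R_n\bigl(T_{\beta_n}(t)-T_\Omega(t)\bigr)W+(R_n-R_\Omega)T_\Omega(t)W .
\]
This uses that $R_n$ commutes with $T_{\beta_n}(t)$ and $R_\Omega$ commutes with $T_\Omega(t)$. The first term is bounded by $M\|(R_\Omega-R_n)W\|_{X^1_{\beta_n}}\to0$ by (\ref{x1}). The third term tends to $0$ uniformly in $t\in[0,\tau]$ by (\ref{x2}), since $\{T_\Omega(t)W: t\in[0,\tau]\}$ is compact in $X^1_\Omega$ by strong continuity.

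\emph{Middle term.} This is the main obstacle. I will write it via the Duhamel-type formula
\[
R_n\bigl(T_{\beta_n}(t)-T_\Omega(t)\bigr)R_\Omega V=\int_0^t T_{\beta_n}(t-s)\,(R_\Omega-R_n)\,T_\Omega(s)V\,ds,
\]
valid for $V\in X^1_\Omega$. To obtain it, differentiate $s\mapsto T_{\beta_n}(t-s)R_nT_\Omega(s)R_\Omega V$, which is legitimate because $R_\Omega V\in D(B_\Omega)$ and $R_n$ maps into $D(B_{\beta_n})$. The resulting derivative is $T_{\beta_n}(t-s)\bigl[-B_{\beta_n}R_n+R_nB_\Omega\bigr]T_\Omega(s)R_\Omega V$. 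Using $B_{\beta_n}R_n=I+\mu R_n$ and $B_\Omega R_\Omega=I+\mu R_\Omega$, the bracket becomes $(R_n-R_\Omega)T_\Omega(s)V$. One must check carefully that $X^1_\Omega$-valued terms can be regarded in $X^1_{\beta_n}$ without norm change.

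The integrand is bounded by $M\sup_{s\in[0,\tau]}\|(R_n-R_\Omega)T_\Omega(s)V\|_{X^1_{\beta_n}}$, which tends to $0$ by (\ref{x2}) and compactness of the orbit. Hence $R_n(T_{\beta_n}(t)-T_\Omega(t))R_\Omega V\to0$ uniformly on $[0,\tau]$. To handle general $W$ in the middle term, I will run the whole argument with $U=R_\Omega^2V$ instead of $R_\Omega W$. Then the middle term becomes exactly $R_n(T_{\beta_n}-T_\Omega)R_\Omega V$, and it is covered by the estimate above. Density of $D(B_\Omega^2)$ in $X^1_\Omega$, together with the uniform bound $M$, still gives the reduction of the first step, and this concludes (\ref{x3}).
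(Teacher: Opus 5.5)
Your proposal is correct and follows the paper's proof essentially step by step: you use the same Duhamel identity obtained by differentiating $s\mapsto T_{\beta_n}(t-s)R_nT_\Omega(s)R_\Omega V$, the same three-term decomposition, and the same application of Theorem \ref{resolvconverg2} to the compact orbit $\{T_\Omega(s)V\mid s\in[0,\tau]\}$. The only difference is minor: you take $U\in D(B_\Omega^2)$ and need a single density argument, whereas the paper first extends the Duhamel estimate to all of $X^1_\Omega$ using the uniform resolvent bound, and then applies density a second time.
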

\begin{proof}
First we observe that for $t\in[0,\tau]$
\begin{multline}
\|T_{\beta_n}(t)U_n-T_{\Omega}(t)U\|_{X^1_{\beta_n}}\\
\leq \|T_{\beta_n}(t)U_n-T_{\beta_n}(t)U\|_{X^1_{\beta_n}}+
\|T_{\beta_n}(t)U-T_{\Omega}(t)U\|_{X^1_{\beta_n}}\\
\leq Me^{-\delta t}\|U_n-U\|_{X^1_{\beta_n}}+\|T_{\beta_n}(t)U-T_{\Omega}(t)U\|_{X^1_{\beta_n}}\\
\leq M\|U_n-U\|_{X^1_{\beta_n}}+\|T_{\beta_n}(t)U-T_{\Omega}(t)U\|_{X^1_{\beta_n}},
\end{multline}
where $M$ and $\delta$ are the constants of Theorem \ref{semigroup}. Therefore it is sufficient to prove that if $U\in X^1_\Omega$ then for all $\tau>0$
\begin{equation}
\sup_{t\in[0,\tau]}\|T_{\beta_n}(t)U-T_{\Omega}(t)U\|_{X^1_{\beta_n}}\to 0\quad\text{as $n\to\infty$}.
\end{equation}
Now we fix $\mu>0$ and we observe that, for $U\in X^1_\Omega$ and for $0\leq s\leq t$ we have
\begin{multline}
\frac{d}{ds}\left( T_{\beta_n}(t-s)(\mu I-B_{\beta_n})^{-1}T_{\Omega}(s)(\mu I-B_{\Omega})^{-1} U\right)\\
=-B_{\beta_n}T_{\beta_n}(t-s) (\mu I-B_{\beta_n})^{-1}T_{\Omega}(s)(\mu I-B_{\Omega})^{-1} U\\+T_{\beta_n}(t-s) (\mu I-B_{\beta_n})^{-1}B_\Omega T_{\Omega}(s)(\mu I-B_{\Omega})^{-1} U\\
=T_{\beta_n}(t-s)(-B_{\beta_n} )(\mu I-B_{\beta_n})^{-1}T_{\Omega}(s)(\mu I-B_{\Omega})^{-1} U\\-T_{\beta_n}(t-s) (\mu I-B_{\beta_n})^{-1} T_{\Omega}(s)(-B_\Omega)(\mu I-B_{\Omega})^{-1} U\\
=T_{\beta_n}(t-s)\left((\mu I-B_{\Omega})^{-1}-(\mu I-B_{\beta_n})^{-1}\right)T_{\Omega}(s) U.
\end{multline}
In follows that
\begin{multline}
(\mu I-B_{\beta_n})^{-1}T_{\Omega}(t)(\mu I-B_{\Omega})^{-1} U
- T_{\beta_n}(t)(\mu I-B_{\beta_n})^{-1}(\mu I-B_{\Omega})^{-1} U\\
=\int_0^t T_{\beta_n}(t-s)\left((\mu I-B_{\Omega})^{-1}-(\mu I-B_{\beta_n})^{-1}\right)T_{\Omega}(s) U\,ds,\end{multline}
whence
\begin{multline}
(\mu I-B_{\beta_n})^{-1}\left(T_{\Omega}(t)- T_{\beta_n}(t)\right)(\mu I-B_{\Omega})^{-1} U\\
=\int_0^t T_{\beta_n}(t-s)\left((\mu I-B_{\Omega})^{-1}-(\mu I-B_{\beta_n})^{-1}\right)T_{\Omega}(s) U\,ds.\end{multline}
Since the set $\{T_\Omega(s)U\mid s\in[0,\tau]\}$ is compact in $H^1_\Omega(\R^N)$, it follows from Theorem \ref{resolvconverg2} that for all $t\in[0,\tau]$
\begin{multline}
\|(\mu I-B_{\beta_n})^{-1}\left(T_{\Omega}(t)- T_{\beta_n}(t)\right)(\mu I-B_{\Omega})^{-1} U\|_{X^1_{\beta_n}}\\
\leq M \int_0^\tau\|\left((\mu I-B_{\Omega})^{-1}-(\mu I-B_{\beta_n})^{-1}\right)T_{\Omega}(s) U\|_{X^1_{\beta_n}}\,ds\to 0\quad\text{as $n\to\infty$.}
\end{multline}
Therefore we have that for all $U\in D(B_\Omega)$
\begin{equation}\label{provvisorio}
\sup_{t\in[0,\tau]}\|(\mu I-B_{\beta_n})^{-1}\left(T_{\Omega}(t)- T_{\beta_n}(t)\right)U\|_{X^1_{\beta_n}}\to 0 \quad\text{as $n\to\infty$,}
\end{equation}
and by a standard density argument the same is true for all $U\in X^1_\Omega$.
Now we observe that
\begin{multline}
(\mu I-B_{\beta_n})^{-1}\left(T_{\Omega}(t)- T_{\beta_n}(t)\right)U
\\
=\left((\mu I-B_{\beta_n})^{-1}-(\mu I-B_{\beta_\Omega})^{-1}\right)T_{\Omega}(t)U\\+\left(T_{\Omega}(t)-T_{\beta_n(t)}\right)(\mu I-B_{\beta_\Omega})^{-1}U\\+T_{\beta_n}(t)\left((\mu I-B_{\Omega})^{-1}-(\mu I-B_{\beta_n})^{-1}\right)U.
\end{multline}
Now it follows from (\ref{provvisorio})  and from Theorem \ref{resolvconverg2} that 
for all $U\in D(B_\Omega)$
\begin{equation}
\sup_{t\in[0,\tau]}\|\left(T_{\Omega}(t)- T_{\beta_n}(t)\right)U\|_{X^1_{\beta_n}}\to 0 \quad\text{as $n\to\infty$,}
\end{equation}
and again by a standard density argument the same is true for all $U\in X^1_\Omega$. The proof is complete.
\end{proof}

We state and demonstrate some consequences of Theorem \ref{TrKa} which will be needed later, in dealing with the nonlinear problem.

\begin{Corol}\label{uniformTrKa}
Let $(\beta_n)_{n\in\N}$ be a sequence of positive numbers, $\beta_n\to+\infty$ as $n\to\infty$. Let $\mathcal K$ be a compact subset of $X^1_\Omega$. Then for all $\tau>0$
\begin{equation}
\sup_{t\in[0,\tau]}\|(T_{\beta_n}(t)-T_{\Omega}(t))W\|_{X^1_{\beta_n}}\to 0\quad\text{as $n\to\infty$}
\end{equation}
uniformly with respect to $W$ in $\mathcal K$.
\end{Corol}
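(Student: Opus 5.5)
I would argue by contradiction, in the same way as in the proof of the second part of Theorem \ref{resolvconverg2}, using Theorem \ref{TrKa} as the pointwise ingredient. Suppose the uniform convergence fails. Then there exist $\tau>0$, $\epsilon>0$, a strictly increasing sequence of indices $(n_k)_{k\in\N}$, elements $W_k\in\mathcal K$ and times $t_k\in[0,\tau]$ such that
\begin{equation*}
\|(T_{\beta_{n_k}}(t_k)-T_{\Omega}(t_k))W_k\|_{X^1_{\beta_{n_k}}}\geq\epsilon\quad\text{for all $k\in\N$.}
\end{equation*}
Since $\mathcal K$ is compact in $X^1_\Omega$, after passing to a further subsequence we may assume $W_k\to\cl W\in\mathcal K$ in $X^1_\Omega$.

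Next I split the norm by the triangle inequality:
\begin{multline*}
\|(T_{\beta_{n_k}}(t_k)-T_{\Omega}(t_k))W_k\|_{X^1_{\beta_{n_k}}}
\leq\|T_{\beta_{n_k}}(t_k)W_k-T_{\Omega}(t_k)\cl W\|_{X^1_{\beta_{n_k}}}\\
+\|T_{\Omega}(t_k)(\cl W-W_k)\|_{X^1_{\beta_{n_k}}}.
\end{multline*}
The second term takes place entirely in $X^1_\Omega$, where the $X^1_{\beta}$ norm coincides with the $X^1_\Omega$ norm. By Theorem \ref{semigroup} it is therefore bounded by $M\|\cl W-W_k\|_{X^1_\Omega}\to0$.

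For the first term I apply Theorem \ref{TrKa} along the sequence $(\beta_{n_k})_k$, which still tends to $+\infty$, with $U_k:=W_k$ and $U:=\cl W$. The hypothesis $\|W_k-\cl W\|_{X^1_{\beta_{n_k}}}=\|W_k-\cl W\|_{X^1_\Omega}\to0$ holds because all these elements lie in $X^1_\Omega$. Theorem \ref{TrKa} then gives $\sup_{t\in[0,\tau]}\|T_{\beta_{n_k}}(t)W_k-T_\Omega(t)\cl W\|_{X^1_{\beta_{n_k}}}\to0$, so in particular the term evaluated at $t=t_k$ tends to zero. Both terms vanish as $k\to\infty$, which contradicts the lower bound $\epsilon$.

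The only point requiring care is that Theorem \ref{TrKa} is applied with a moving initial datum $W_k$ and uniformly in $t$. That theorem already allows a varying $U_n$ and gives a supremum over $[0,\tau]$, so the variable times $t_k$ cause no difficulty, and no compactness in $t$ is needed. I would also remark that the theorem is stated for the full sequence $(\beta_n)$ but its proof applies verbatim to any subsequence.
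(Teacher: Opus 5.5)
Your proof is correct and follows the paper's argument essentially verbatim: you argue by contradiction, extract $W_k\to\cl W$ using the compactness of $\mathcal K$, control the first term with Theorem \ref{TrKa} applied with moving data, and bound the second term by $M\|\cl W-W_k\|_{X^1_\Omega}$. Your closing remark about subsequences is not needed, since Theorem \ref{TrKa} is stated for an arbitrary sequence $\beta_n\to+\infty$ and so applies directly to $(\beta_{n_k})_k$.
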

\begin{proof}
We want to prove that
\begin{equation}
\sup_{W\in\mathcal K}\sup_{t\in[0,\tau]}\|(T_{\beta_n}(t)-T_{\Omega}(t))W\|_{X^1_{\beta_n}}\to 0\quad\text{as $n\to\infty$.}
\end{equation}
Assume by contradiction that this is not true. Then there exists $\epsilon>0$ and for all $k\in\N$ there exist $n_k>\max\{k,n_{k-1}\}$, $t_k\in[0,\tau]$ and $W_k\in\mathcal K$ such that
\begin{equation}
\|(T_{\beta_{n_k}}(t_k)-T_{\Omega}(t_k))W_k\|_{X^1_{\beta_{n_k}}}\geq\epsilon.
\end{equation}
Since $\mathcal K$ is compact, there exists $\cl W\in\mathcal K$ such that, up to a subsequence,
$W_k\to \cl W$ in $X^1_\Omega$ as $k\to\infty$. Then by (\ref{x3})
\begin{equation}
\|T_{\beta_{n_k}}(t_k)W_k-T_{\Omega}(t_k)\cl W\|_{X^1_{\beta_{n_k}}}\to 0\quad\text{as $k\to\infty$.}
\end{equation}
Therefore we have
\begin{multline}
\|(T_{\beta_{n_k}}(t_k)-T_{\Omega}(t_k))W_k\|_{X^1_{\beta_{n_k}}}\\
\leq \|T_{\beta_{n_k}}(t_k)W_k-T_\Omega(t_k))\cl W\|_{X^1_{\beta_{n_k}}}+\|T_{\Omega}(t_k)(\cl W-W_k)\|_{X^1_{\beta_{n_k}}}\\
\leq \|T_{\beta_{n_k}}(t_k)W_k-T_\Omega(t_k))\cl W\|_{X^1_{\beta_{n_k}}}+\|T_{\Omega}(t_k)(\cl W-W_k)\|_{X^1_\Omega}\\
\leq \|T_{\beta_{n_k}}(t_k)W_k-T_\Omega(t_k))\cl W\|_{X^1_{\beta_{n_k}}}+M\|\cl W-W_k\|_{X^1_\Omega}
\to 0\quad\text{as $k\to\infty$,}
\end{multline} a contradiction. The proof is complete.
\end{proof}

We need the following Lemma.

\begin{Lemma}\label{uniformstrongcont}
Let $\mathcal K$ be a compact subset of $X^1_\Omega$. Then for every $\epsilon>0$ there exists $\tau>0$ such that 
\begin{equation}
\sup_{t\in[0,\tau]}\|T_\Omega(t)\cl U-\cl U\|_{X^1_\Omega}<\epsilon
\end{equation}
uniformly with respect to $\cl U\in{\mathcal K}$.
\end{Lemma}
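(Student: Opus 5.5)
The plan is to reduce the claim to the strong continuity of $T_\Omega(t)$ at each single point, combined with the uniform bound $\|T_\Omega(t)\|_{\mathcal L(X^1_\Omega)}\leq M$ for $t\geq0$ from Theorem \ref{semigroup}, via a finite $\eta$-net argument. Neither the singular structure nor any dependence on $\beta$ enters here: the statement concerns only the fixed semigroup $T_\Omega(t)$ on the fixed Hilbert space $X^1_\Omega$. This is the classical fact that a strongly continuous, locally equibounded family of operators converges uniformly on compact sets.

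Fix $\epsilon>0$ and set $\eta:=\epsilon/(2(M+1))$. Since $\mathcal K$ is compact in $X^1_\Omega$, there are finitely many points $U_1,\dots,U_m\in\mathcal K$ such that every $\cl U\in\mathcal K$ satisfies $\|\cl U-U_i\|_{X^1_\Omega}<\eta$ for some $i$. By strong continuity of $T_\Omega$ at $t=0$, for each $i$ there is $\tau_i>0$ with
\[
\sup_{t\in[0,\tau_i]}\|T_\Omega(t)U_i-U_i\|_{X^1_\Omega}<\epsilon/2 .
\]
Put $\tau:=\min_i\tau_i>0$, which is positive because there are only finitely many $i$.

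Now take $\cl U\in\mathcal K$, choose $i$ with $\|\cl U-U_i\|_{X^1_\Omega}<\eta$, and let $t\in[0,\tau]$. The triangle inequality gives
\[
\|T_\Omega(t)\cl U-\cl U\|_{X^1_\Omega}\leq\|T_\Omega(t)(\cl U-U_i)\|_{X^1_\Omega}+\|T_\Omega(t)U_i-U_i\|_{X^1_\Omega}+\|U_i-\cl U\|_{X^1_\Omega}.
\]
By Theorem \ref{semigroup} the first term is at most $Me^{-\delta t}\eta\leq M\eta$, the second is less than $\epsilon/2$, and the third is less than $\eta$. The sum is therefore less than $(M+1)\eta+\epsilon/2=\epsilon$, uniformly in $\cl U\in\mathcal K$ and $t\in[0,\tau]$. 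Since the inequality at each $t$ is strict and uniform, it survives the supremum over the compact interval (one can also simply replace $\epsilon$ by $\epsilon/2$ at the start).

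The only point needing care is the choice of $\eta$ relative to $M$, so that the equiboundedness absorbs the net approximation. I do not expect a genuine obstacle; in particular no compactness of $[0,\tau]$ beyond strong continuity is needed. Alternatively, one could argue by contradiction as in Corollary \ref{uniformTrKa}, taking $t_k\to0$ and $\cl U_k\to\cl U$, but the $\eta$-net argument is more direct.
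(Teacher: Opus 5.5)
Your proof is correct and uses the same ingredients as the paper's: strong continuity of $T_\Omega(t)$ at individual points, the uniform bound $\|T_\Omega(t)\|_{\mathcal L(X^1_\Omega)}\leq M$ from Theorem~\ref{semigroup}, and a triangle inequality that produces the factor $M+1$. The only difference is packaging: the paper argues by contradiction, using sequential compactness ($t_k\to 0$ and a convergent subsequence $\cl U_k\to\cl U$), while you use total boundedness directly through a finite $\eta$-net, which is the alternative you mention yourself.
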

\begin{proof}
Assume by contradiction that this is not true. Then there exists $\epsilon>0$ and for every $k\in\N$ there exist $t_k\in[0,1/k]$ and $\cl U_k\in\mathcal K$ such that
\begin{equation*}
\|T_\Omega(t_k)\cl U_k-\cl U_k\|_{X^1_\Omega}\geq\epsilon.
\end{equation*}
Since $\mathcal K$ is compact, there exists $\cl U\in\mathcal K$ such that, up to a subsequence, $\cl U_k\to\cl U$ in $X^1_\Omega$ as $k\to\infty$. It follows that
\begin{multline*}
\|T_\Omega(t_k)\cl U-\cl U\|_{X^1_\Omega}=\|T_\Omega(t_k)\cl U-T_\Omega(t_k)\cl U_k+T_\Omega(t_k)\cl U_k-\cl U_k+\cl U_k-\cl U\|_{X^1_\Omega}\\
\geq \|T_\Omega(t_k)\cl U_k-\cl U_k\|_{X^1_\Omega}- \|T_\Omega(t_k)\cl U-T_\Omega(t_k)\cl U_k\|_{X^1_\Omega}-\|\cl U_k-\cl U\|_{X^1_\Omega}\\
\geq  \epsilon- (M+1)\|\cl U_k-\cl U\|_{X^1_\Omega}.
\end{multline*}
Letting $k\to\infty$ we get $0\geq\epsilon$, a contradiction.
\end{proof}
 
 Now we can state and demonstrate the second corollary.
 
\begin{Corol}\label{corolxyz}
Let  $\mathcal K$ be a compact subset of $X^1_\Omega$  and let $D>0$. There exist $\rho>0$, $\tau>0$ and $\cl\beta>0$ such that, whenever $\cl U\in \mathcal K$,
\begin{enumerate}
\item for all $U\in X^1_\Omega$ with $\|U-\cl U\|_{X^1_\Omega}\leq\rho$ and for all $t\in[0,\tau]$
\begin{equation}
\|T_\Omega(t) U-U\|_{X^1_\Omega}\leq D
\end{equation}
\item for all $\beta\geq\cl \beta$, for all $U\in X^1_\beta$ with $\|U-\cl U\|_{X^1_\beta}\leq\rho$ and for all $t\in[0,\tau]$
\begin{equation}
\|T_\beta(t) U-U\|_{X^1_\beta}\leq D.
\end{equation}
\end{enumerate}
\end{Corol}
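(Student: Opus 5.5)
The idea is to reduce both statements to the compact set $\mathcal K$ itself, using the uniform bound $\|T_\beta(t)\|\le M$ from Theorem \ref{semigroup}, Lemma \ref{uniformstrongcont}, and Corollary \ref{uniformTrKa}. Since $X^1_\Omega$ sits isometrically inside every $X^1_\beta$, for $\cl U\in\mathcal K$ we have $\|\cl U\|_{X^1_\beta}=\|\cl U\|_{X^1_\Omega}$. Also $T_\Omega(t)\cl U\in X^1_\Omega$, so $\|T_\Omega(t)\cl U-\cl U\|_{X^1_\beta}=\|T_\Omega(t)\cl U-\cl U\|_{X^1_\Omega}$.

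For statement (1), fix $U$ with $\|U-\cl U\|_{X^1_\Omega}\le\rho$ and split
$$
T_\Omega(t)U-U=T_\Omega(t)(U-\cl U)+\bigl(T_\Omega(t)\cl U-\cl U\bigr)+(\cl U-U).
$$
This gives
$$
\|T_\Omega(t)U-U\|_{X^1_\Omega}\le (M+1)\rho+\|T_\Omega(t)\cl U-\cl U\|_{X^1_\Omega}.
$$
Choose $\rho$ with $(M+1)\rho\le D/3$. Then apply Lemma \ref{uniformstrongcont} with $\epsilon=D/3$ to get $\tau>0$ such that the last term is $<D/3$ for all $\cl U\in\mathcal K$ and $t\in[0,\tau]$.

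For statement (2), use the analogous splitting in $X^1_\beta$ with one more term:
$$
T_\beta(t)U-U=T_\beta(t)(U-\cl U)+\bigl(T_\beta(t)-T_\Omega(t)\bigr)\cl U+\bigl(T_\Omega(t)\cl U-\cl U\bigr)+(\cl U-U).
$$
The first and last terms are bounded by $(M+1)\rho\le D/3$, with $M$ independent of $\beta$. The third term is $<D/3$ by the choice of $\tau$ made above. The second term is the main obstacle. Corollary \ref{uniformTrKa} gives
$$
\sup_{\cl U\in\mathcal K}\sup_{t\in[0,\tau]}\|(T_{\beta_n}(t)-T_\Omega(t))\cl U\|_{X^1_{\beta_n}}\to0,
$$
but only along sequences $\beta_n\to\infty$, whereas we need this for all $\beta\ge\cl\beta$.

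To get a threshold $\cl\beta$, I would argue by contradiction. If no $\cl\beta$ works, then there is a sequence $\beta_n\to\infty$ with
$$
\sup_{\cl U\in\mathcal K}\sup_{t\in[0,\tau]}\|(T_{\beta_n}(t)-T_\Omega(t))\cl U\|_{X^1_{\beta_n}}\ge D/3,
$$
which contradicts Corollary \ref{uniformTrKa}. Hence there is $\cl\beta$ such that the second term is $<D/3$ for all $\beta\ge\cl\beta$, uniformly in $\cl U\in\mathcal K$ and $t\in[0,\tau]$.

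Summing the three pieces of size $D/3$ gives the bound $D$ in both statements, with $\rho$, $\tau$ and $\cl\beta$ independent of $\cl U\in\mathcal K$. Apart from the sequence-to-threshold step, everything is routine. One should only check that when $M>1$ (i.e. $\delta>0$) the constants are still uniform in $\beta$, which Theorem \ref{semigroup} guarantees.
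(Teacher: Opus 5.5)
Your proof is correct and matches the paper's argument: the same choices ($(M+1)\rho\le D/3$, $\tau$ from Lemma \ref{uniformstrongcont}, $\cl\beta$ from Corollary \ref{uniformTrKa}) and the same three- and four-term splittings. Your contradiction argument, which turns the sequential statement of Corollary \ref{uniformTrKa} into a threshold $\cl\beta$ valid for all $\beta\geq\cl\beta$, makes explicit a step the paper leaves implicit.
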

\begin{proof}
Let $M>0$ be as in Theorem \ref{semigroup}. Let $\rho$ be such that $3(M+1)\rho\leq D$. By Lemma \ref{uniformstrongcont}  there exists $\tau>0$ such that for all $\cl U\in\mathcal K$
\begin{equation}
\sup_{t\in[0,\tau]}\|T_\Omega(t)\cl U-\cl U\|_{X^1_\Omega}
\leq D/3.
\end{equation}
By Corollary \ref{uniformTrKa} there exists $\cl\beta>0$ such that for all $\cl U\in\mathcal K$ and all $\beta\geq\cl\beta$ 
\begin{equation}
\sup_{t\in[0,\tau]}\|T_\beta(t)\cl U-T_\Omega(t)\cl U\|_{X^1_\beta}\leq D/3.
\end{equation}
Now fix $\cl U\in\mathcal K$. For $U\in X^1_\Omega$ with $\|U-\cl U\|_{X^1_\Omega}\leq\rho$ and for $t\in[0,\tau]$
we have
\begin{multline}
\|T_\Omega(t) U- U\|_{X^1_\Omega}\\
\leq\|T_\Omega(t)U-T_\Omega(t)\cl U\|_{X^1_\Omega}+\|T_\Omega(t)\cl U-\cl U\|_{X^1_\Omega}+\|\cl U- U\|_{X^1_\Omega}\\
\leq M\|U-\cl U\|_{X^1_\Omega}+\sup_{t\in[0,\tau]}\|T_\Omega(t)\cl U-\cl U\|_{X^1_\Omega}+\|\cl U- U\|_{X^1_\Omega}\\
\leq (M+1)\rho+\sup_{t\in[0,\tau]}\|T_\Omega(t)\cl U-\cl U\|_{X^1_\Omega}\leq D/3+D/3=2D/3;
\end{multline}
For $\beta\geq\bar\beta$, $U\in X^1_\beta$ with $\|U-\cl U\|_{X^1_\beta}\leq\rho$ and $t\in[0,\tau]$ we have
\begin{multline}
\|T_\beta(t) U- U\|_{X^1_\beta}\\
\leq\|T_\beta(t)U-T_\beta(t)\cl U\|_{X^1_\beta}+\|T_\beta(t)\cl U-T_\Omega(t)\cl U\|_{X^1_\beta} +\|T_\Omega(t)\cl U-\cl U\|_{X^1_\beta}+\|\cl U- U\|_{X^1_\beta}\\
\leq (M+1)\|U-\cl U\|_{X^1_\beta}+\sup_{t\in[0,\tau]}\|T_\beta(t)\cl U-T_\Omega(t)\cl U\|_{X^1_\beta} +\sup_{t\in[0,T]}\|T_\Omega(t)\cl U-\cl U\|_{X^1_\Omega}\\
\leq (M+1)\rho+D/3+D/3\leq D.
\end{multline}
The proof is complete.
\end{proof}


\section{The nonlinear problem}

In this section we assume that $N=3$.
\begin{Hyp}\label{nonlinhypoth1}
Let $g\colon\R^N\times\R \to\R$ satisfy the following assumptions:
\begin{enumerate}
\item for every $s\in \R$ the function $g(\cdot, s)$ is measurable;
\item $g(\cdot, 0)=0$ almost everywhere;
\item for almost every $x\in\R^N$ the function $g(x,\cdot)$ is of class $C^1$;
\item there exists a constant $C_1>0$ and a non-negative measurable function $a(\cdot)$ such that for almost every $x\in\R^N$
\begin{equation*}|\partial_s g(x,s)|\leq C_1(a(x)+|s|^2), \quad s\in\R; \end{equation*}
\item the function $a(\cdot)$ in (4) is such that the assignment $u(\cdot)\mapsto a(\cdot)u(\cdot)$ defines a bounded linear map from $H^1(\R^3)$ to $L^2(\R^3)$.
\end{enumerate}
\end{Hyp}

\begin{Rem}
Due to the Sobolev embedding $H^1(\R^3)\hookrightarrow L^6(\R^3)$, a condition on $a(\cdot)$ ensuring that (5) is satisfied is the following: there exist $a_1(\cdot)\in L^{p_1}(\R^3)$, \dots, $a_m(\cdot)\in L^{p_m}(\R^3)$, with $3\leq p_j\leq+\infty$, $j=1$, \dots, $m$, such that $a=a_1+\cdots+ a_m$.
\end{Rem}

\begin{Hyp} \label{nonlinhypoth2}
For $\beta\geq 0$ let $\chi_\beta(\cdot)\in L^2(\R^3)$; moreover, let $\chi_\Omega(\cdot)\in L^2_\Omega(\R^3)$. We assume that
\begin{equation*}
\|\chi_\beta-\chi_\Omega\|_{L^2(\R^3)}\to 0\quad\text{as $\beta\to+\infty$.}
\end{equation*}
\end{Hyp}

We set 

\begin{equation}f_\beta(x,s):=\chi_\beta(x)+g(x,s)\quad\text{ and}\quad f_\Omega(x,s):=\chi_\Omega(x)+g(x,s).\end{equation}

We introduce the Nemitski operators associated with $f_\beta$ and $f_\Omega$. If $u\colon\R^3\to \R$, $\hat f_\beta(u)\colon\R^3\to\R$
and $\hat f_\Omega(u)\colon\R^3\to\R$ are defined by
\begin{equation}
\hat f_\beta(u) (x):=f_\beta(x,u(x))\quad\text{and}\quad\hat f_\Omega(u) (x):=f_\Omega(x,u(x)).
\end{equation}

We have the following result:

\begin{Prop} Assume that Hypotheses \ref{nonlinhypoth1} and \ref{nonlinhypoth2} are satisfied. Then 
 $\hat f_\beta$ maps $H^1(\R^3)$ into $L^2(\R^3)$, $\hat f_\Omega$ maps $H^1_\Omega(\R^3)$ into $L^2_\Omega(\R^3)$ and there exists a positive constant $C$
such that
\begin{enumerate}
\item for every $\beta\geq 0$ and every $u\in H^1(\R^3)$ \begin{equation*}\|\hat f_\beta(u)\|_{L^2(\R^3)}\leq C(1+\|u\|_{1,\beta}^3);\end{equation*}
\item for every $\beta\geq 0$ and every $u_1,u_2\in H^1(\R^3)$ \begin{equation*}\|\hat f_\beta(u_1)-\hat f_\beta(u_2)\|_{L^2(\R^3)}\leq C(1+\|u_1\|_{1,\beta}^2+\|u_2\|_{1,\beta}^2)\|u_1-u_2\|_{1,\beta};\end{equation*}
\item for every $u\in H^1_\Omega(\R^3)$ \begin{equation*} \|\hat f_\Omega(u)\|_{L^2_\Omega(\R^3)}\leq C(1+\|u\|_{H^1_\Omega(\R^3)}^3);\end{equation*}
\item for every $u_1,u_2\in H^1_\Omega(\R^3)$ \begin{equation*}\|\hat f_\Omega(u_1)-\hat f_\Omega(u_2)\|_{L^2_\Omega(\R^3)}\leq C(1+\|u_1\|_{H^1_\Omega(\R^3)}^2+\|u_2\|_{H^1_\Omega(\R^3)}^2)\|u_1-u_2\|_{H^1_\Omega(\R^3)};\end{equation*}
\item for every $u\in H^1_\Omega (\R^3)$ \begin{equation*}\|\hat f_\beta(u)-\hat f_\Omega(u)\|_{L^2(\R^3)}=\|\chi_\beta-\chi_\Omega\|_{L^2(\R^3)}\to 0\quad\text{as $\beta\to+\infty$}.\end{equation*}
\end{enumerate}
The constant $C$ depends only on $C_1$, on the norm of the operator  $u\mapsto au$ from $H^1(\R^3)$ to $L^2(\R^3)$, on $\sup_{\beta\geq 0}\|\chi_\beta\|_{L^2(\R^3)}$ and on the constant of the Sobolev embedding $H^1(\R^3)\hookrightarrow L^6(\R^3)$. In particular $C$ is independent of $\beta$.
\end{Prop}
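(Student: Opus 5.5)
The plan is to reduce everything to pointwise estimates on $g$ obtained from the mean value theorem and Hypothesis \ref{nonlinhypoth1}(4), and then integrate using H\"older's inequality and the Sobolev embedding $H^1(\R^3)\hookrightarrow L^6(\R^3)$, with constant $S$. Since $\|u\|_{H^1(\R^3)}\leq\|u\|_{1,\beta}$ for every $\beta\geq0$, and $\|u\|_{1,\beta}=\|u\|_{H^1(\R^3)}$ on $H^1_\Omega(\R^3)$, every bound proved in terms of $\|u\|_{H^1(\R^3)}$ yields the $\beta$-uniform statements (1)--(4) at once.

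First I write $\hat f_\beta(u)=\chi_\beta+\hat g(u)$. Since $g(x,0)=0$, for a.e. $x$ we have
$g(x,s_1)-g(x,s_2)=\int_0^1\partial_sg(x,s_2+\theta(s_1-s_2))\,d\theta\,(s_1-s_2)$. This gives the pointwise bound
$|g(x,s_1)-g(x,s_2)|\leq C_1\bigl(a(x)+|s_1|^2+|s_2|^2\bigr)|s_1-s_2|$, up to a factor $2$. Measurability of $x\mapsto g(x,u(x))$ follows from the Carath\'eodory structure: measurable in $x$ and continuous in $s$.

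Next I take $L^2$ norms. The term $a|u_1-u_2|$ is bounded by $\|a\cdot\|_{\mathcal L(H^1,L^2)}\|u_1-u_2\|_{H^1}$ by Hypothesis \ref{nonlinhypoth1}(5). For the term $|u_i|^2|u_1-u_2|$, H\"older with exponents $(3,6)$ and then $L^6$ gives $\|u_i\|_{L^6}^2\|u_1-u_2\|_{L^6}\leq S^3\|u_i\|_{H^1}^2\|u_1-u_2\|_{H^1}$. This proves (2). Taking $u_2=0$ gives $\|\hat g(u)\|_{L^2}\leq C(\|u\|_{H^1}+\|u\|^3_{H^1})$, and together with $\|\chi_\beta\|_{L^2}$ this yields (1). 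I use $s\leq 1+s^3$ for $s\geq0$. The quantity $\sup_\beta\|\chi_\beta\|_{L^2}$ is finite by Hypothesis \ref{nonlinhypoth2}, at least for large $\beta$; I would note that, for $\beta$ in a bounded range, it must be assumed finite, or else $C$ would depend on that sup, as the statement says.

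For the $\Omega$-statements I note that if $u\in H^1_\Omega(\R^3)$, then $u=0$ a.e. outside $\Omega$. Hence $g(x,u(x))=g(x,0)=0$ there, and also $\chi_\Omega=0$ there. So $\hat f_\Omega(u)\in L^2_\Omega(\R^3)$, and (3) and (4) are special cases of the previous estimates. Finally, (5) is immediate because $\hat f_\beta(u)-\hat f_\Omega(u)=\chi_\beta-\chi_\Omega$ identically, and Hypothesis \ref{nonlinhypoth2} gives the convergence. There is no real obstacle here. The only care needed is the H\"older/Sobolev exponent bookkeeping in dimension $3$, and tracking that the constants never involve $\beta$.
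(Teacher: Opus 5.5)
Your proposal is correct and follows the paper's argument. The paper's proof only cites Hypotheses \ref{nonlinhypoth1}--\ref{nonlinhypoth2}, H\"older's inequality and the Sobolev embedding $H^1(\R^3)\hookrightarrow L^6(\R^3)$; your mean-value pointwise bound, H\"older with exponents $(3,6)$, and the comparison $\|u\|_{H^1(\R^3)}\leq\|u\|_{1,\beta}$ are exactly the details behind that citation. Your remark that $\sup_{\beta\geq0}\|\chi_\beta\|_{L^2(\R^3)}$ must be tacitly assumed finite is well taken. One small addition: for (3) you also need $\|\chi_\Omega\|_{L^2(\R^3)}\leq\sup_{\beta}\|\chi_\beta\|_{L^2(\R^3)}$, which holds because $\chi_\beta\to\chi_\Omega$ in $L^2(\R^3)$.
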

\begin{proof}The statements are a straightforward consequence of Hypotheses \ref{nonlinhypoth1} and \ref{nonlinhypoth2}, of the Sobolev embedding $H^1(\R^3)\hookrightarrow L^6(\R^3)$ and of H\umlaut older inequality. \end{proof}

We consider the semilinear equation
\begin{equation}
\begin{aligned}
u_{tt}+\gamma u_t+V_\beta(x)u-\Delta u&=f_\beta(x,u),&&(t,x)\in[0,+\infty[\times\R^3,
\end{aligned}\end{equation}
which can be rewritten first as a system
\begin{equation}
\begin{cases} u_t=v\\
v_t=\Delta U-V_\beta(x)u-\gamma v+f_\beta(x,u)
\end{cases}
\end{equation}
and then as an abstract equation 
\begin{equation}\label{abstr1}
\dot U(t)=B_\beta U(t)+\Phi_\beta(U(t)), \quad U(t)=(u(t),v(t))\in X^1_\beta,
\end{equation}
where
\begin{equation}
\Phi_\beta(U):=(0,\hat f_\beta(u)),\quad U=(u,v)\in X^1_\beta.
\end{equation}
Similarly, the equation
\begin{equation}
\begin{aligned}
u_{tt}+\gamma u_t+u-\Delta u&=f_\Omega(x,u),&&(t,x)\in[0,+\infty[\times\Omega\\u&=0,&&(t,x)\in[0,+\infty[\times\partial\Omega
\end{aligned}\end{equation}
can be rewritten first as a system
\begin{equation}
\begin{cases} u_t=v\\
v_t=\Delta U-Vu-\gamma v+f_\Omega(x,u)
\end{cases}
\end{equation}
and then as an abstract equation 
\begin{equation}\label{abstr2}
\dot U(t)=B_\Omega U(t)+\Phi_\Omega(U(t)), \quad U(t)=(u(t),v(t))\in X^1_\Omega,
\end{equation}
where
\begin{equation}
\Phi_\Omega(U):=(0,\hat f_\Omega(u)),\quad U=(u,v)\in X^1_\Omega.
\end{equation}
We say that a continuous function $U\colon[0,\tau]\to X^1_\beta$ is a {\it mild solution} of (\ref{abstr1}) with initial datum $U_0\in X^1_\beta$ iff
\begin{equation}
U(t)=T_\beta(t)U_0+\int_0^t T_\beta(t-s)\Phi_\beta(U(s))\,ds, \quad t\in[0,\tau].
\end{equation}
Similarly, we say that a continuous function $U\colon[0,\tau]\to X^1_\Omega$ is a {\it mild solution} of (\ref{abstr2}) with initial datum $U_0\in X^1_\Omega$ iff
\begin{equation}
U(t)=T_\Omega(t)U_0+\int_0^t T_\Omega(t-s)\Phi_\Omega(U(s))\,ds, \quad t\in[0,\tau].
\end{equation}
The local existence and uniqueness of mild solutions for abstract equations like (\ref{abstr1}) and (\ref{abstr2}) is a well established fact  and is obtained by mean of the Banach contraction theorem (see e.g. \cite{CH}). However, since our aim is to pass to the (singular) limit as $\beta\to+\infty$, we need to go into the details of the proof of the existence result, in order to track the involved constants and control that they remain uniformly bounded as $\beta\to+\infty$.

We have the following {\it local} singular convergence result:

\begin{Theorem}\label{localnonlinear}
Assume Hypotheses \ref{prop_V}, \ref{nonlinhypoth1} and \ref{nonlinhypoth2} are satisfied.
Let $\mathcal K$ be a compact subset of $X^1_\Omega$, with $\|U\|_{X^1_\Omega}\leq R$ for all $U\in\mathcal K$. There exist $0<\rho\leq R$, $\tau>0$ and $\cl\beta>0$ such that if $\cl U_\Omega\in\mathcal K$ then
\begin{enumerate}
\item for every $U_{\Omega,0}\in X^1_\Omega$ with $\|U_{\Omega,0}-\cl U_\Omega\|_{X^1_\Omega}\leq\rho$ there is a unique mild solution $U_\Omega(t)$ of (\ref{abstr2}) on $[0,\tau]$ with $U_\Omega(0)=U_{\Omega,0}$, and 
\begin{equation*}
\sup_{t\in[0,\tau]}\|U_\Omega(t)-U_{\Omega,0}\|_{X^1_\Omega}\leq R;\end{equation*}
\item for every $\beta\geq\cl\beta$ and for every $U_{\beta,0}\in X^1_\beta$ with $\|U_{\beta,0}-\cl U_\Omega\|_{X^1_\beta}\leq\rho$ there is a unique mild solution $U_\beta(t)$ of (\ref{abstr1}) on $[0,\tau]$ with $U_\beta(0)=U_{\beta,0}$, and 
\begin{equation*}
\sup_{t\in[0,\tau]}\|U_\beta(t)-U_{\beta,0}\|_{X^1_\beta}\leq R;
\end{equation*}
\item let $(\beta_n)_{n\in\N}$ be a sequence of positive real numbers with $\beta_n\to +\infty$ as $n\to\infty$, let $U_{\Omega,0}\in X^1_\Omega$ with $\|U_{\Omega,0}-\cl U_\Omega\|_{X^1_\Omega}\leq \rho/2$, and for every $n\in\N$ let $U_{\beta_n,0}\in X^1_{\beta_n}$ with $\|U_{\beta_n,0}-U_{\Omega,0}\|_{X^1_{\beta_n}}\to 0$ as $n\to\infty$. Then there exist $\cl n\in\N$ such that for all $n\geq\cl n$ the solution $U_{\beta_n}(t)$ of (\ref{abstr1})  with $U_{\beta_n}(0)=U_{\beta_n,0}$
is defined $[0,\tau]$ and
\begin{equation}
\sup_{t\in[0,\tau]}\|U_{\beta_n}(t)-U_\Omega(t)\|_{X^1_{\beta_n}}
\to 0\quad\text{as $n\to\infty$.}
\end{equation}
\end{enumerate}
\end{Theorem}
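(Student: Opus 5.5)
Parts (1) and (2) follow from the Banach contraction principle, with constants chosen uniformly in $\beta$. Part (3) then follows from a Gronwall argument built on Corollary \ref{uniformTrKa}.

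\emph{Parts (1) and (2).} Let $C$ be the $\beta$-independent constant of the Proposition and $M,\delta$ those of Theorem \ref{semigroup}. Put $L:=C(1+2(2R)^2)$, which is a Lipschitz bound for $\Phi_\beta$ and $\Phi_\Omega$ on balls of radius $2R$ in $X^1_\beta$ and $X^1_\Omega$. Put $F:=C(1+(2R)^3)$, which bounds $\Phi_\beta$ and $\Phi_\Omega$ on those balls. Apply Corollary \ref{corolxyz} to $\mathcal K$ with $D=R/2$; this yields $\rho_0$, $\tau_0$ and $\cl\beta$. Set $\rho:=\min\{\rho_0,R\}$. Shrink $\tau\le\tau_0$ so that $\tau MF\le R/2$ and $\tau ML<1/2$.

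For $\beta\ge\cl\beta$ and $\|U_{\beta,0}-\cl U_\Omega\|_{X^1_\beta}\le\rho$, consider the closed set
\[
\mathcal S:=\Big\{U\in C([0,\tau];X^1_\beta):\ \sup_{t}\|U(t)-U_{\beta,0}\|_{X^1_\beta}\le R\Big\}.
\]
Every element of $\mathcal S$ has norm at most $\|U_{\beta,0}\|+R\le 2R+\rho\le 3R$, so I should take $L,F$ on balls of radius $3R$ instead. Let $\Gamma$ be the map $U\mapsto T_\beta(t)U_{\beta,0}+\int_0^tT_\beta(t-s)\Phi_\beta(U(s))\,ds$. Corollary \ref{corolxyz}(2) gives $\|T_\beta(t)U_{\beta,0}-U_{\beta,0}\|\le R/2$. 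The integral term is bounded by $\tau MF\le R/2$. Hence $\Gamma$ maps $\mathcal S$ into itself, and it is a contraction with constant $\tau ML<1/2$.

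Uniqueness among all mild solutions comes from a standard Gronwall/continuation argument. Part (1) is identical, using Corollary \ref{corolxyz}(1).

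\emph{Part (3).} Set $\rho':=\|U_{\Omega,0}-\cl U_\Omega\|\le\rho/2$. Since $\|U_{\beta_n,0}-\cl U_\Omega\|_{X^1_{\beta_n}}\le\rho/2+o(1)\le\rho$ for large $n$, and $\beta_n\ge\cl\beta$ eventually, part (2) gives solutions on $[0,\tau]$. These solutions are bounded by $3R$, as is $U_\Omega$. Note that $\|\cdot\|_{X^1_{\beta_n}}$ restricted to $X^1_\Omega$ equals $\|\cdot\|_{X^1_\Omega}$, so estimates involving $U_\Omega$ can be measured in $X^1_{\beta_n}$. Write
\[
U_{\beta_n}(t)-U_\Omega(t)=\big[T_{\beta_n}(t)U_{\beta_n,0}-T_\Omega(t)U_{\Omega,0}\big]+\int_0^tT_{\beta_n}(t-s)\big[\Phi_{\beta_n}(U_{\beta_n}(s))-\Phi_{\beta_n}(U_\Omega(s))\big]ds+\int_0^t\big[T_{\beta_n}(t-s)\Phi_{\beta_n}(U_\Omega(s))-T_\Omega(t-s)\Phi_\Omega(U_\Omega(s))\big]ds.
\]
The terms are handled as follows.
\begin{itemize}
\item The first term tends to $0$ uniformly on $[0,\tau]$ by Theorem \ref{TrKa}.
\item The second term is at most $ML\int_0^t\|U_{\beta_n}-U_\Omega\|\,ds$, which will be absorbed by Gronwall.
\item For the third term, I split $\Phi_{\beta_n}(U_\Omega)=\Phi_\Omega(U_\Omega)+(0,\chi_{\beta_n}-\chi_\Omega)$, using item (5) of the Proposition. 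The $\chi$ part contributes at most $\tau M\|\chi_{\beta_n}-\chi_\Omega\|\to0$.
\end{itemize}

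The remaining piece, $\int_0^t(T_{\beta_n}(t-s)-T_\Omega(t-s))\Phi_\Omega(U_\Omega(s))\,ds$, is the main obstacle. The set $\{\Phi_\Omega(U_\Omega(s)):s\in[0,\tau]\}$ is the continuous image of a compact interval in $X^1_\Omega$; continuity holds because $\hat f_\Omega$ is Lipschitz from $H^1_\Omega$ to $L^2_\Omega$. So this set is compact, and Corollary \ref{uniformTrKa} gives uniform convergence of the integrand in both $s$ and $t-s$. The piece is therefore $o(1)$ uniformly in $t$. Gronwall's inequality then gives $\sup_{[0,\tau]}\|U_{\beta_n}-U_\Omega\|_{X^1_{\beta_n}}\le o(1)\,e^{ML\tau}\to0$.
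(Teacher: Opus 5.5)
Your proposal is correct and follows essentially the same route as the paper. Parts (1) and (2) use a contraction on the set of continuous curves staying within $R$ of the initial datum, with uniformity in $\beta$ coming from Corollary \ref{corolxyz} and the $\beta$-independent bounds on the $3R$-ball. Part (3) uses the same four-term splitting, handled by Theorem \ref{TrKa}, Hypothesis \ref{nonlinhypoth2}, and Corollary \ref{uniformTrKa} applied to the compact set $\{\Phi_\Omega(U_\Omega(s))\mid s\in[0,\tau]\}$.

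There are two minor differences. You close part (3) with Gronwall, whereas the paper absorbs the Lipschitz term directly using $M\tau L_{3R}\le 1/2$. You also explicitly note uniqueness among all mild solutions, while the paper only obtains uniqueness within the ball.
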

\begin{proof}
First we observe that if $U=(u,v)\in X^1_\Omega$ with $\|U\|_{X^1_\Omega}\leq 3R$, then 
\begin{multline}
\|\Phi_\Omega(U)\|_{X^1_\Omega}=\|(0,\hat f_\Omega(u))\|_{X^1_\Omega}=\|\hat f_\Omega(u)\|_{L^2_\Omega(\R^3)}\\
\leq C(1+\|u\|_{H^1_\Omega(\R^3)}^3)\leq C(1+\|U\|_{X^1_\Omega}^3)\leq C(1+(3R)^3)=:K_{3R}.
\end{multline}
Moreover, if $U_1$ and $U_2\in X^1_\Omega$ with $\|U_1\|_{X^1_\Omega}\leq 3R$ and $\|U_2\|_{X^1_\Omega}\leq 3R$, then
\begin{multline}
\|\Phi_\Omega(U_1)-\Phi_\Omega(U_2)\|_{X^1_\Omega}
\leq C(1+\|U_1\|_{X^1_\Omega}^2+\|U_2\|_{X^1_\Omega}^2)
\|U_1-U_2\|_{X^1_\Omega}\\
\leq C(1+2(3R)^2)\|U_1-U_2\|_{X^1_\Omega}=:L_{3R}\|U_1-U_2\|_{X^1_\Omega}.
\end{multline}
Similarly, one can easily see that if $\beta\geq 0$ and $U=(u,v)\in X^1_\beta$ with $\|U\|_{X^1_\beta}\leq 3R$, then 
\begin{equation}
\|\Phi_\beta(U)\|_{X^1_\beta}\leq K_{3R},
\end{equation}
and if $U_1$ and $U_2\in X^1_\beta$ with $\|U_1\|_{X^1_\beta}\leq 3R$ and $\|U_2\|_{X^1_\beta}\leq 3R$, then
\begin{equation}
\|\Phi_\beta(U_1)-\Phi_\beta(U_2)\|_{X^1_\beta}
\leq L_{3R}\|U_1-U_2\|_{X^1_\beta}.
\end{equation}
Now we take $\rho>0$, $\tau>0$ and $\cl\beta>0$ such that $\rho\leq R$ and the conclusions of Corollary \ref{corolxyz} hold with $D=3R/4$. Namely, we ask that:
\begin{itemize}
\item $\rho>0$ is such that \begin{equation}\label{zzz1}(M+1)\rho\leq R/4;\end{equation}
\item $\tau>0$ is such that
\begin{equation}\label{zzz2}\sup_{\cl U\in\mathcal K}\sup_{t\in[0,\tau]}\|T_\Omega(t)\cl U-\cl U\|_{X^1_\Omega}
\leq R/4;
\end{equation}
\item $\cl\beta>0$ is such that for all $\beta\geq\cl\beta$
\begin{equation}\label{zzz3}\sup_{\cl U\in\mathcal K}\sup_{t\in[0,\tau]}\|T_\beta(t)\cl U-T_\Omega(t)\cl U\|_{X^1_\beta}
\leq R/4.
\end{equation}
\end{itemize}
Moreover, we ask that 
\begin{itemize}
\item $\tau>0$ is such that \begin{equation}\label{zzz4} M\tau K_{3R}\leq R/4\quad\text{and}\quad M\tau L_{3R}\leq 1/2.\end{equation}
\end{itemize}
Now let $\cl U_\Omega\in\mathcal K$ be fixed. Let $U_{\Omega,0}\in X^1_\Omega$ be such that $\|U_{\Omega,0}-\cl U_\Omega\|_{X^1_\Omega}\leq \rho$. We introduce the complete metric space
\begin{equation}
\mathscr{X}_{U_{\Omega,0}}:=\left\{ U:[0,\tau]\to X^1_\Omega\mid \text{$U$ is continuous and $\|U(t)-U_{\Omega,0}\|_{X^1_\Omega}\leq R$ for $t\in[0,\tau]$} \right\}
\end{equation}
equipped with the distance induced by the $C^0$-norm. We notice that  if $U\in\mathscr{X}_{U_{\Omega,0}} $ then $\|U(t)\|_{X^1_\Omega}\leq 3R$ for $t\in[0,\tau]$.
We define the map 

\begin{equation}\begin{aligned}&{\rm G}_{U_{\Omega,0}}\colon\mathscr{X}_{U_{\Omega,0}}\to C^0([0,\tau], X^1_\Omega)\\
&({\rm G}_{U_{\Omega,0}}U)(t):=T_\Omega(t)U_{\Omega,0}+\int_0^tT_\Omega(t-s)\Phi_\Omega(U(s))\,ds
\end{aligned}\end{equation}
If $U(\cdot)$ is a fixed point of ${\rm G}_{U_{\Omega,0}}$, then $U(\cdot)$ is a mild solution of (\ref{abstr2}) on $[0,\tau]$ with $U(0)=U_{\Omega,0}$. Let $U\in\mathscr{X}_{U_{\Omega,0}}$. For $t\in[0,\tau]$, by (\ref{zzz1}), (\ref{zzz2}), (\ref{zzz3}), (\ref{zzz4}) and Corollary \ref{corolxyz} we have:
\begin{multline}
\|({\rm G}_{U_{\Omega,0}}U)(t)-U_{\Omega,0}\|_{X^1_\Omega}\\
\leq \|T_\Omega(t)U_{\Omega,0}-U_{\Omega,0}\|_{X^1_\Omega}
+\int_0^t M \|\Phi_\Omega(U(s))\|_{X^1_\Omega}\,ds\\
\leq  \|T_\Omega(t)U_{\Omega,0}-U_{\Omega,0}\|_{X^1_\Omega}+\int_0^t M K_{3R}\,ds\\
\leq \|T_\Omega(t)U_{\Omega,0}-U_{\Omega,0}\|_{X^1_\Omega}+M\tau K_{3R}\leq \frac34 R+\frac14 R=R.
\end{multline}
Therefore ${\rm G}_{U_{\Omega,0}}$ maps $\mathscr{X}_{U_{\Omega,0}}$ into itself. Next, if $U_1$ and $U_2\in\mathscr{X}_{U_{\Omega,0}}$, then for $t\in[0,\tau]$ we have:
\begin{multline}
\|({\rm G}_{U_{\Omega,0}}U_1)(t)-({\rm G}_{U_{\Omega,0}}U_2)(t)\|_{X^1_\Omega}\\
\leq 
\int_0^t M \|\Phi_\Omega(U_1(s))-\Phi_\Omega(U_2(s))\|_{X^1_\Omega}\,ds
\leq  \int_0^t M L_{3R}\|U_1(s)-U_2(s))\|_{X^1_\Omega}\,ds\\
\leq M\tau L_{3R}\sup_{s\in[0,\tau]}\|U_1(s)-U_2(s))\|_{X^1_\Omega}  
\leq \frac12\sup_{s\in[0,\tau]}\|U_1(s)-U_2(s))\|_{X^1_\Omega}.
\end{multline}
We have just proved that ${\rm G}_{U_{\Omega,0}}$ is a contraction in the complete metric space $\mathscr{X}_{U_{\Omega,0}}$, so it has a unique fixed point. This proves the first statement of the Theorem.

In order to prove the second statement we procede in a similar way. Let $\beta\geq\cl\beta$ and let $U_{\beta,0}\in X^1_\beta$ be such that $\|U_{\beta,0}-\cl U_\Omega\|_{X^1_\beta}\leq \rho$. We introduce the complete metric space
\begin{equation}
\mathscr{X}_{U_{\beta,0}}:=\left\{ U:[0,\tau]\to X^1_\beta\mid \text{$U$ is continuous and $\|U(t)-U_{\beta,0}\|_{X^1_\beta}\leq R$ for $t\in[0,\tau]$} \right\}
\end{equation}
equipped with the distance induced by the $C^0$-norm. If $U\in\mathscr{X}_{U_{\beta,0}} $ then $\|U(t)\|_{X^1_\beta}\leq 3R$ for $t\in[0,\tau]$.
We define the map 

\begin{equation}\begin{aligned}&{\rm G}_{U_{\beta,0}}\colon\mathscr{X}_{U_{\beta,0}}\to C^0([0,\tau], X^1_\beta)\\
&({\rm G}_{U_{\beta,0}}U)(t):=T_\beta(t)U_{\beta,0}+\int_0^tT_\beta(t-s)\Phi_\beta(U(s))\,ds
\end{aligned}\end{equation}
Again, if $U(\cdot)$ is a fixed point of ${\rm G}_{U_{\beta,0}}$, then $U(\cdot)$ is a mild solution of (\ref{abstr1}) on $[0,\tau]$ with $U(0)=U_{\beta,0}$. Let $U\in\mathscr{X}_{U_{\beta,0}}$. For $t\in[0,\tau]$, by (\ref{zzz1}), (\ref{zzz2}), (\ref{zzz3}), (\ref{zzz4}) and Corollary \ref{corolxyz} we have:
\begin{multline}
\|({\rm G}_{U_{\beta,0}}U)(t)-U_{\beta,0}\|_{X^1_\beta}\\
\leq \|T_\beta(t)U_{\beta,0}-U_{\beta,0}\|_{X^1_\beta}
+\int_0^t M \|\Phi_\beta(U(s))\|_{X^1_\beta}\,ds\\
\leq  \|T_\beta(t)U_{\beta,0}-U_{\beta,0}\|_{X^1_\beta}+\int_0^t M K_{3R}\,ds\\
\leq \|T_\beta(t)U_{\beta,0}-U_{\beta,0}\|_{X^1_\beta}+M\tau K_{3R}\leq \frac34 R+\frac14 R=R.
\end{multline}
This means that ${\rm G}_{U_{\beta,0}}$ maps $\mathscr{X}_{U_{\beta,0}}$ into itself. Next, if $U_1$ and $U_2\in\mathscr{X}_{U_{\beta,0}}$, then for $t\in[0,\tau]$ we have:
\begin{multline}
\|({\rm G}_{U_{\beta,0}}U_1)(t)-({\rm G}_{U_{\beta,0}}U_2)(t)\|_{X^1_\beta}\\
\leq 
\int_0^t M \|\Phi_\beta(U_1(s))-\Phi_\beta(U_2(s))\|_{X^1_\beta}\,ds
\leq  \int_0^t M L_{3R}\|U_1(s)-U_2(s))\|_{X^1_\beta}\,ds\\
\leq M\tau L_{3R}\sup_{s\in[0,\tau]}\|U_1(s)-U_2(s))\|_{X^1_\beta}  
\leq \frac12\sup_{s\in[0,\tau]}\|U_1(s)-U_2(s))\|_{X^1_\beta}.
\end{multline}
This means that ${\rm G}_{U_{\beta,0}}$ is a contraction in the complete metric space $\mathscr{X}_{U_{\beta,0}}$, and therefore it has a unique fixed point. This concludes the proof of the second statement of the Theorem.

Now we move on to the proof of the third statement. Let $\beta_n\to +\infty$ as $n\to\infty$, let $U_{\Omega,0}\in X^1_\Omega$ with $\|U_{\Omega,0}-\cl U_\Omega\|_{X^1_\Omega}\leq \rho/2$, and for every $n\in\N$ let $U_{\beta_n,0}\in X^1_{\beta_n}$ with $\|U_{\beta_n,0}-U_{\Omega,0}\|_{X^1_{\beta_n}}\to 0$ as $n\to\infty$. Then there exists $\cl n\in\N$ such that for all $n\geq\cl n$ one has $\beta_n\geq\cl\beta$ and  $\|U_{\beta_n,0}-U_{\Omega,0}\|_{X^1_{\beta_n}}\leq\rho/2$, and consequently $\|U_{\beta_n,0}-\cl U_\Omega\|_{X^1_{\beta_n}}\leq\rho$. By parts (1) and (2) of the present theorem, there is a unique mild solution $U_\Omega(\cdot)\in\mathscr{X}_{U_{\Omega,0}}$ of (\ref{abstr2}) with $U_\Omega(0)=U_{\Omega,0}$; moreover, for all $n\geq\cl n$
there is a unique mild solution $U_{\beta_n}(\cdot)\in\mathscr{X}_{U_{\beta_n,0}}$ of (\ref{abstr1}) with $U_{\beta_n}(0)=U_{\beta_n,0}$. More explicitely, we have
\begin{equation}
U_{\Omega}(t)=T_\Omega(t)U_{\Omega,0}+\int_0^tT_\Omega(t-s)\Phi_\Omega(U_{\Omega}(s))\,ds,\quad t\in[0,\tau]
\end{equation}
and
\begin{equation}
U_{\beta_n}(t)=T_{\beta_n}(t)U_{\beta_n,0}+\int_0^tT_{\beta_n}(t-s)\Phi_{\beta_n}(U_{\beta_n}(s))\,ds,\quad t\in[0,\tau].
\end{equation}
Then for $t\in[0,\tau]$ we have:
\begin{multline}
\|U_{\beta_n}(t)-U_\Omega(t)\|_{X^1_{\beta_n}}\leq\|T_{\beta_n}(t)U_{\beta_n,0}-T_\Omega(t)U_{\Omega,0}\|_{X^1_{\beta_n}}\\
+\|\int_0^tT_{\beta_n}(t-s)\left(\Phi_{\beta_n}(U_{\beta_n}(s)-\Phi_{\beta_n}(U_{\Omega}(s))\right)\,ds\|_{X^1_{\beta_n}}\\
+\|\int_0^tT_{\beta_n}(t-s)\left(\Phi_{\beta_n}(U_{\Omega}(s)-\Phi_{\Omega}(U_{\Omega}(s))\right)\,ds\|_{X^1_{\beta_n}}\\
+\|\int_0^t\left(T_{\beta_n}(t-s)-T_{\Omega}(t-s)\right)\Phi_{\Omega}(U_{\Omega}(s))\,ds\|_{X^1_{\beta_n}}.
\end{multline}
It follows that
\begin{multline}
\|U_{\beta_n}(t)-U_\Omega(t)\|_{X^1_{\beta_n}}\leq\sup_{s\in[0,\tau]}\|T_{\beta_n}(s)U_{\beta_n,0}-T_\Omega(s)U_{\Omega,0}\|_{X^1_{\beta_n}}\\
+M\tau L_{3R}\sup_{s\in[0,\tau]}\|U_{\beta_n}(s)-U_\Omega(s)\|_{X^1_{\beta_n}}+M\tau\|\chi_{\beta_n}-\chi_\Omega\|_{L^2(\R^3)}\\
+\tau\sup_{0\leq s\leq t\leq\tau}\|\left(T_{\beta_n}(t-s)-T_{\Omega}(t-s)\right)\Phi_{\Omega}(U_{\Omega}(s))\|_{X^1_{\beta_n}}.
\end{multline}
Since $M\tau L_{3R}\leq1/2$, we have
\begin{multline}
\sup_{t_\in[0,\tau]}\|U_{\beta_n}(t)-U_\Omega(t)\|_{X^1_{\beta_n}}\\
\leq2\sup_{s\in[0,\tau]}\|T_{\beta_n}(s)U_{\beta_n,0}-T_\Omega(s)U_{\Omega,0}\|_{X^1_{\beta_n}}
+2M\tau\|\chi_{\beta_n}-\chi_\Omega\|_{L^2(\R^3)}
\\+2\tau\sup_{0\leq s\leq t\leq\tau}\|\left(T_{\beta_n}(t-s)-T_{\Omega}(t-s)\right)\Phi_{\Omega}(U_{\Omega}(s))\|_{X^1_{\beta_n}}
=:a_n+b_n+c_n.
\end{multline}
Now we have that $a_n\to0$ as $n\to\infty$ by Theorem \ref{TrKa}, and $b_n\to 0$ as $n\to\infty$ by Hypothesis \ref{nonlinhypoth2}.
Concerning $(c_n)_{n,\in\N}$, we apply the result of
Corollary \ref{uniformTrKa} to the compact set $\tilde{\mathcal K}:=\{\Phi_\Omega(U_\Omega(s))\mid{s\in[0,\tau]}\}\subset X^1_\Omega$ and we get that $c_n\to0$ as $n\to\infty$. The proof is complete.
\end{proof}

Finally, we have the following {\it global} singular convergence result:
\begin{Theorem}\label{globalnonlinear} Assume Hypotheses \ref{prop_V}, \ref{nonlinhypoth1} and \ref{nonlinhypoth2} are satisfied. Let $ U_{\Omega,0}\in X^1_\Omega$, let $T>0$, and suppose that there exists a mild solution $U_\Omega\colon[0,T]\to X^1_\Omega$ of (\ref{abstr2}), with $U_\Omega(0)= U_{\Omega,0}$.
Let $(\beta_n)_{n\in\N}$ be a sequence of positive numbers with $\beta_n\to+\infty$ as $n\to\infty$, and for every $n\in\N$ let $ U_{\beta_n,0}\in X^1_{\beta_n}$. Assume that $\| U_{\beta_n,0}- U_{\Omega,0}\|_{X^1_{\beta_n}}\to0$ as $n\to\infty$. Then there is $\cl n\in\N$ such that for all $n\geq\cl n$  there exists a mild solution $U_{\beta_n}\colon[0,T]\to X^1_{\beta_n}$ of (\ref{abstr1}), with $U_{\beta_n}(0)= U_{\beta_n,0}$, and
\begin{equation}
\sup_{t\in[0,T]}\|U_{\beta_n}(t)-U_\Omega(t)\|_{X^1_{\beta_n}}\to 0\quad\text{as $n\to\infty$.}
\end{equation}
\end{Theorem}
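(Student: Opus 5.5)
The plan is to reduce the global statement to repeated applications of the local result, Theorem \ref{localnonlinear}, along the compact trajectory of the limit problem. Set $\mathcal K:=\{U_\Omega(t)\mid t\in[0,T]\}$. Since $U_\Omega$ is continuous into $X^1_\Omega$, $\mathcal K$ is compact. Choose $R>0$ with $\|U\|_{X^1_\Omega}\leq R$ for all $U\in\mathcal K$. Theorem \ref{localnonlinear} applied to this $\mathcal K$ and $R$ gives $\rho\in\,]0,R]$, $\tau>0$ and $\cl\beta>0$, which do not depend on the particular point of $\mathcal K$. Then fix a partition $0=t_0<t_1<\dots<t_m=T$ with $t_{j+1}-t_j\leq\tau$.

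The argument is an induction on $j$. The claim at stage $j$ is that there is $n_j$ such that, for $n\geq n_j$, the mild solution $U_{\beta_n}$ exists on $[0,t_j]$ and
$$\sup_{t\in[0,t_j]}\|U_{\beta_n}(t)-U_\Omega(t)\|_{X^1_{\beta_n}}\to0.$$
For $j=0$ this is just the hypothesis on the initial data. For the inductive step, take $\cl U_\Omega:=U_\Omega(t_j)\in\mathcal K$ and $U_{\Omega,0}:=U_\Omega(t_j)$, so that trivially $\|U_{\Omega,0}-\cl U_\Omega\|_{X^1_\Omega}=0\leq\rho/2$. Take $U_{\beta_n,0}:=U_{\beta_n}(t_j)$; by the inductive hypothesis, $\|U_{\beta_n,0}-U_{\Omega,0}\|_{X^1_{\beta_n}}\to0$. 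Part (3) of Theorem \ref{localnonlinear} then gives, for large $n$, a solution starting from $U_{\beta_n}(t_j)$ on an interval of length $\tau$, which contains $[t_j,t_{j+1}]$ after translation. It also gives uniform convergence there to the solution of (\ref{abstr2}) starting at $U_\Omega(t_j)$.

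To finish the inductive step, two identifications are needed. First, by the uniqueness in part (1), the limit solution starting at $U_\Omega(t_j)$ coincides with $t\mapsto U_\Omega(t_j+t)$. Second, concatenating $U_{\beta_n}$ on $[0,t_j]$ with the translated local solution produces a mild solution on $[0,t_{j+1}]$. This is the standard semigroup property of the variation-of-constants formula: one splits $\int_0^t=\int_0^{t_j}+\int_{t_j}^t$ and uses $T_\beta(t)=T_\beta(t-t_j)T_\beta(t_j)$. After $m$ steps one obtains existence on $[0,T]$ and uniform convergence on $[0,T]$.

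I expect the main subtlety to be the translation-invariance and uniqueness bookkeeping rather than any estimate. One has to check that the restarted limit solution agrees with $U_\Omega$: uniqueness holds in the ball of radius $R$, and $U_\Omega$ restricted to $[t_j,t_{j+1}]$ must lie in it. Here I would use that $\|U_\Omega(t)-U_\Omega(t_j)\|\leq 2R$ automatically. If that is not enough, I would first shrink $\tau$, using uniform continuity of $U_\Omega$ on $[0,T]$, so that $\|U_\Omega(t)-U_\Omega(s)\|\leq R$ whenever $|t-s|\leq\tau$. That places $U_\Omega$ in the contraction space, and the fixed point there is unique. The remaining point is that the constants $\rho,\tau,\cl\beta$ are uniform over $\mathcal K$, which is exactly what Theorem \ref{localnonlinear} provides.
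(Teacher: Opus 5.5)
Your proposal is correct and follows the paper's route: the paper's proof sets $\mathcal K=\{U_\Omega(t)\mid t\in[0,T]\}$, chooses $R$, and says only that Theorem \ref{localnonlinear} is applied a finite number of times. You have supplied the partition, the induction, the concatenation via the semigroup property, and the identification by uniqueness that the paper leaves implicit. On the uniqueness point you hedge on, you do not need to shrink $\tau$. The contraction estimate in Theorem \ref{localnonlinear} uses only the Lipschitz constant $L_{3R}$, valid on the ball of radius $3R$. Since $\|U_\Omega(t)\|_{X^1_\Omega}\leq R$ for all $t$, both $t\mapsto U_\Omega(t_j+t)$ and the local solution starting at $U_\Omega(t_j)$ lie in that ball, so they coincide.
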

\begin{proof}
Define the compact subset $\mathcal K$ of $X^1_\Omega$:
\begin{equation}
\mathcal K:=\{U_\Omega(t)\mid t\in[0,T]\}.
\end{equation}
Let $R>0$ be such that $\|U\|_{X^1_\Omega}\leq R$ for all $U\in\mathcal K$. The proof is obtained by applying the results of Theorem \ref{localnonlinear} a finite number of times.
\end{proof}

We conclude with a remark concerning exponential dichotomies. Let $\nu>0$ and consider the operators 
$$
B_{\Omega, \nu}(u,v)=(v,-(A_\Omega -\nu I) u-\gamma v)=B_\Omega(u,v)+Z_\nu(0, u)
$$ 
and 
$$B_{\beta, \nu}(u,v)=(v,-(A_\beta -\nu I) u-\gamma v)=B_\beta(u,v)+Z_\nu(0, u),$$
where $Z_\nu(U)=Z_\nu(u,v)=(0,-\nu u)$. 
It follows from \cite[Th. 6.4]{Gold} that $B_{\Omega, \nu}$ and $B_{\beta, \nu}$ generate strongly continuous semigroups $T_{\Omega,\nu}(t)$ and $T_{\beta,\nu}(t)$ in $X^1_\Omega$ and $X^1_\beta$ respectively. Moreover, the following identities hold:
$$
T_{\Omega,\nu}(t)U=T_\Omega(t)U+\int_0^t T_\Omega(t-s)Z_\nu(T_{\Omega,\nu}(s)U)\,ds,\quad U\in X^1_\Omega
$$
and
$$
T_{\beta,\nu}(t)U=T_\beta(t)U+\int_0^t T_\beta(t-s)Z_\nu(T_{\beta,\nu}(s)U)\,ds,\quad U\in X^1_\beta.
$$

 Let $(\beta_n)_{n\in\N}$ be a sequence of positive numbers, $\beta_n\to+\infty$ as $n\to\infty$. Let $U_n=(u_n,v_n)\in X^1_{\beta_n}$ for $n\in\N$, let $U=(u,v)\in X^1_\Omega$,  and assume that $\|U_n -U\|_{X^1_{\beta_n}}\to 0$ as $n\to\infty$. Then Theorem \ref{globalnonlinear}  implies that 
 \begin{equation}
 \label{blabla}\|T_{\beta_n,\nu}(t)U_n-T_{\Omega,\nu}(t)U\|_{X^1_{\beta_n}}\to 0 \quad\text{as $n\to\infty$.}
 \end{equation}

 Assume now that $\gamma>0$. If $\nu>0$ is large and is not an eigenvalue of $A_\Omega$, it is easy to see that the semigroup $T_{\Omega,\nu}(t)$ has a nontrivial exponential dichotomy, with spectral projection $Q_{\Omega,\nu}$. 
Thanks to the spectral convergence results of Section 3, also the semigroups $T_{\beta,\nu}(t)$ have an exponential dichotomy, with spectral projections $Q_{\beta,\nu}$, provided $\beta$ is sufficiently large. Moreover, it is not difficult to show that $\|Q_{\beta_n,\nu}U_n-Q_{\Omega,\nu}U\|_{X^1_{\beta_n}}\to 0$ as $n\to\infty$.
This fact, together with (\ref{blabla}), implies that 
$$\|T_{\beta_n,\nu}(t)Q_{\beta_n,\nu}U_n-T_{\Omega,\nu}(t)Q_{\Omega,\nu}U\|_{X^1_{\beta_n}}\to 0 \quad\text{as $n\to\infty$.}$$

\end{document}